\title[Positive $q$-series and inequalities]{Certain positive $q$-series and inequalities for two-color partitions}
\theoremstyle{definition}
\newtheorem{definition}{Definition}
\theoremstyle{plain}
\newtheorem{lemma}      {Lemma}
\newtheorem{theorem}    {Theorem}
\newtheorem{corollary}  {Corollary}
\newtheorem{conjecture} {Conjecture}
\newtheorem{openproblem} {Open Problem}
\theoremstyle{remark}
\numberwithin{equation}{section}
 \newcommand{\Mod}[1]{\ (\mathrm{mod}\ #1)}
\newcommand{\fr}{\frac}
\mathchardef\pFcomma=\mathcode`, 
\newcommand*\pFq[5]{%
  \begingroup
  \begingroup\lccode`~=`,
  \lowercase{\endgroup\def~}{\pFcomma\mkern\pFqskip}%
 \mathcode`,=\string"8000
 {}_{#1}\phi_{#2}\biggl[\genfrac..{0pt}{}{#3}{#4};#5\biggr]%
 \endgroup
 }
\mathchardef\pGcomma=\mathcode`, 
\begin{document}
\author[ G. E. Andrews and M. El Bachraoui]{George E. Andrews and Mohamed El Bachraoui}
\address{The Pennsylvania State University, University Park, Pennsylvania 16802}
\email{andrews@math.psu.edu}
\address{Dept. Math. Sci,
United Arab Emirates University, PO Box 15551, Al-Ain, UAE}
\email{melbachraoui@uaeu.ac.ae}
\dedicatory{Dedicated to Mourad Ismail for his 80th birthday}
\keywords{$q$-series, positive $q$-series, oscillating $q$-series. }
\subjclass[2000]{11P81; 05A17; 11D09}
\begin{abstract}
We consider some $q$-series which depend on a pair of positive integers $(k,m)$.
While positivity of these series holds for the first few values of $(k,m)$, the situation is quite unclear for other values of $(k,m)$.
In addition, our series generate the number of certain two-color integer partitions weighted by $(-1)^j$ where $j$ is the number of even parts.
Therefore, inequalities involving these partitions will be deduced from the positivity of their generating functions.

\end{abstract}
\date{\textit{\today}}
\thanks{First author partially supported by Simons Foundation Grant 633284}
\maketitle
\section{Introduction}
Throughout let $q$ denote a complex number satisfying $|q|<1$
and let $m$ and $n$ denote nonnegative integers.
We adopt the following standard notation from the theory of $q$-series~\cite{Andrews, Gasper-Rahman}
\[
(a;q)_0 = 1,\  (a;q)_n = \prod_{j=0}^{n-1} (1-aq^j),\quad
(a;q)_{\infty} = \prod_{j=0}^{\infty} (1-aq^j),
\]
\[
(a_1,\ldots,a_k;q)_n = \prod_{j=1}^k (a_j;q)_n,\ \text{and\ }
(a_1,\ldots,a_k;q)_{\infty} = \prod_{j=1}^k (a_j;q)_{\infty}.
\]
We will need the following basic facts
\begin{equation}\label{basic}
(a;q)_{n+m} = (a;q)_{m} (aq^{m};q)_n,\ \text{and\ }
(a;q)_{\infty} = (a;q)_n (aq^n;q)_{\infty}.
\end{equation}
A power series $\sum_{n\geq 0} a_n q^n$ is called positive 
if $a_n \geq 0$ for any nonnegative integer $n$. 
Positivity results for $q$-series have been intensively studied in the past to some extent in connection with
Borwein's famous positivity conjecture~\cite{Andrews 1995}.
For more on this, see for instance~\cite{Berkovich-Warnaar, Bressoud, Wang, Warnaar-Zudilin}.
Our main purpose in this note is to make clear that there are interesting
problems associated with positivity questions related to the following two $q$-series.
\begin{equation}\label{main series1}
\sum_{n\geq 0}C'(k,m,n) q^n = \sum_{n\geq 0} q^{m(2n+1)} \fr{(q^{2n+2},q^{2n+2k};q^2)_\infty}{(q^{2n+1};q^2)_\infty^2}
\end{equation}
and
\begin{equation}\label{main series2}
\sum_{n\geq 0}  D'(k,m,n) q^n = \sum_{n\geq 0} q^{m(2n+2)} \fr{(q^{2n+4},q^{2n+2+2k};q^2)_\infty}{(q^{2n+3};q^2)_\infty^2},
\end{equation}
where $k$ and $m$ are positive integers.
For instance, we will see below in Corollary~\ref{cor ineqC1} and Theorem~\ref{thm positiveC23} that $\sum_{n\geq 0}C'(k,m,n) q^n$ is positive for
\[
(k,m)\in\{(1,1),(2,1),(3,1),(2,2),(2,3)\}.
\]
Furthermore, it is our conjecture that $\sum_{n\geq 0}C'(2,4,n) q^n$
is positive as well, see Conjecture~\ref{conj positiveC24}. On the other hand, $\sum_{n\geq 0}C'(2,5,n) q^n$
is oscillating and it is an open problem whether there exists $m>4$ such that
the series $\sum_{n\geq 0}C'(2,m,n) q^n$ is positive.

In addition, inequalities involving integer partitions have received much focus in recent years,
see for instance~\cite{Berkovich-Grizzell, Kim-Kim-Lovejoy}.
It turns out that the series we look at in this work are natural generating functions for weighted
two-color integer partitions with smallest part. So, we will also deduce inequalities involving two-color partitions through
the positivity of their corresponding $q$-series.

The rest of the paper is organized as follow. In Section~\ref{sec odd-spt} we focus on the two-color integer partitions where the smallest part is odd
and in Section~\ref{sec even-spt} we discuss the case where the smallest part is even. Sections~\ref{sec proof thm-C'D'}-\ref{sec proof thm positiveD21}
are devoted to the proofs of the main theorems. Finally, in Section~\ref{sec concluding} we give some comments and suggestions for further research.
%
\section{Two-color partitions with odd smallest part}\label{sec odd-spt}
We start with the following natural interpretation for $C'(k,m,n)$ as two-color partitions.
\begin{definition}
Let $k$ and $m$ be fixed positive integers. For a positive integer $n$, let $\mathcal{C}(k,m,n)$ denote the set of two-color partitions $\pi(n)$ of $n$
in which:
\begin{itemize}
\item the smallest part $s(\pi)$ is odd and occurs at least $m$ times in blue color,
\item the even blue parts are at least $2k-1$ more than $s(\pi)$,
\item the even parts of the same color are distinct.
\end{itemize}
By letting $q^{2n+1}\fr{(q^{2n+2k};q^2)_\infty}{(q^{2n+1};q^2)_\infty}$ generate the blue parts and
$\fr{(q^{2n+2};q^2)_\infty}{(q^{2n+1};q^2)_\infty}$ generate the green parts, we have
\[
\sum_{n\geq 0} \mathcal{C}(k,m,n)
=\sum_{n\geq 0} q^{m(2n+1)} \fr{(-q^{2n+2},-q^{2n+2k};q^2)_\infty}{(q^{2n+1};q^2)_\infty^2}.
\]
Now let $C_0(k,m,n)$ (resp. $C_1(k,m,n)$) be the
number of partitions from $\mathcal{C}(k,m,n)$ wherein the number of even parts is even (resp. odd).
Then it is easy to check that
\begin{equation}\label{gen C'}
\sum_{n\geq 0}  \big( C_0(k,m,n)-C_1(k,m,n) \big) q^n
= \sum_{n\geq 0} q^{m(2n+1)} \fr{(q^{2n+2},q^{2n+2k};q^2)_\infty}{(q^{2n+1};q^2)_\infty^2}.
\end{equation}
That is,
\[
C'(k,m,n)= C_0(k,m,n)-C_1(k,m,n),
\]
from which it follows that positivity of the series $\sum_{n\geq 0} C'(k,m,n) q^n$ means the partition inequality
$C_1(k,m,n) \leq C_0(k,m,n)$.
\end{definition}
We will achieve our main results through the following identity.
\begin{theorem}\label{thm C'}
Let $k$ and $m$ be positive integers. Then
\[
\sum_{n\geq 0}C'(k,m,n) q^n
=\begin{cases}
\fr{q^m}{(q;q^2)_{k-1}} \displaystyle \sum_{n\geq 0}\fr{q^n (q^{2n+2};q^2)_{m-1}}{(q^{2n+2k-1};q^2)_{m-k+1}}
&\ \text{if $m\geq k$} \\
\fr{q^m}{(q;q^2)_{k-1}} \displaystyle \sum_{n\geq 0} q^n (q^{2n+2};q^2)_{m-1}(q^{2n+2m+1};q^2)_{k-m-1}
&\ \text{if $m<k$}.
\end{cases}
\]
\end{theorem}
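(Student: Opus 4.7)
The plan is to rewrite the left-hand side as a basic hypergeometric series, apply Heine's first transformation, and then split into the cases $m\ge k$ and $m<k$. First I would use \eqref{basic} to pull the three infinite products $(q^{2n+2};q^2)_\infty$, $(q^{2n+2k};q^2)_\infty$, and $(q^{2n+1};q^2)_\infty^{-2}$ out of the summand, leaving the series ${}_2\phi_1(q,q;q^{2k};q^2,q^{2m})$ times the prefactor $q^m\,(q^2,q^{2k};q^2)_\infty/(q;q^2)_\infty^2$.

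Next I would apply Heine's first transformation at base $q^2$,
\[
{}_2\phi_1(a,b;c;q,z)=\fr{(b,az;q)_\infty}{(c,z;q)_\infty}\,{}_2\phi_1(c/b,z;az;q,b),
\]
with $a=b=q$, $c=q^{2k}$, $z=q^{2m}$. This converts the sum into one whose summation variable is $q$ and whose summand involves $(q^{2k-1};q^2)_n$, $(q^{2m};q^2)_n$, and $(q^{2m+1};q^2)_n$. After collapsing the combined infinite-product prefactor via $(q^{2m+1};q^2)_\infty/(q;q^2)_\infty=1/(q;q^2)_m$ and $(q^2;q^2)_\infty/(q^{2m};q^2)_\infty=(q^2;q^2)_{m-1}$, and after rewriting the two $q^{2m}$-shifted Pochhammer symbols inside the sum in terms of $(q;q^2)_{n+m}$ and $(q^2;q^2)_{n+m-1}$, all $m$-dependent factors cancel and the identity reduces to the intermediate form
\[
\sum_{n\ge 0}C'(k,m,n)\,q^n \;=\; q^m\sum_{n\ge 0}\fr{(q^{2k-1};q^2)_n\,(q^{2n+2};q^2)_{m-1}}{(q;q^2)_{n+m}}\,q^n,
\]
valid uniformly in $m$ and $k$.

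Finally I would simplify the quotient $(q^{2k-1};q^2)_n/(q;q^2)_{n+m}$ in two complementary ways. If $m\ge k$, two applications of \eqref{basic} yield $(q;q^2)_{n+m}=(q;q^2)_{k-1}\,(q^{2k-1};q^2)_n\,(q^{2n+2k-1};q^2)_{m-k+1}$, and substitution produces the first branch of the theorem. If $m<k$, I would instead write both numerator and denominator as quotients of infinite products and invoke $(q^{2k-1};q^2)_\infty/(q;q^2)_\infty=1/(q;q^2)_{k-1}$ together with $(q^{2n+2m+1};q^2)_\infty=(q^{2n+2m+1};q^2)_{k-m-1}\,(q^{2n+2k-1};q^2)_\infty$, which produces the second branch.

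The only non-routine step is the choice of Heine transformation: only the variant in which the new summation variable is $b=q$ (rather than $z=q^{2m}$) produces a series whose summand carries finite shifted factorials of $q^2$-Pochhammer type whose lengths depend on $k$ and $m$ but not on the summation index, as the target formula demands. Everything else is mechanical bookkeeping with \eqref{basic}.
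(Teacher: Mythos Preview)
Your proposal is correct and follows essentially the same route as the paper: factor out the infinite products to recognize ${}_2\phi_1(q,q;q^{2k};q^2,q^{2m})$, apply Heine's first transformation with $a=b=q$, $c=q^{2k}$, $z=q^{2m}$, and then simplify the resulting Pochhammer symbols into the two cases. The only cosmetic difference is that the paper, after Heine, rewrites each finite Pochhammer as a ratio of infinite products and arrives at the intermediate expression
\[
q^m\,\fr{(q^{2k-1};q^2)_\infty}{(q;q^2)_\infty}\sum_{n\ge 0}\fr{q^n\,(q^{2n+2},q^{2n+2m+1};q^2)_\infty}{(q^{2n+2k-1},q^{2n+2m};q^2)_\infty},
\]
whereas you stay with finite symbols and record the equivalent intermediate form $q^m\sum_{n\ge 0}(q^{2k-1};q^2)_n(q^{2n+2};q^2)_{m-1}\,q^n/(q;q^2)_{n+m}$; these are the same expression and the final case split is identical.
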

By virtue of Theorem~\ref{thm C'} and simplification, for $(k,m)\in\{(1,1),(2,1),(2,2),(3,1)\}$
we get
\begin{align}
\sum_{n\geq 0}C'(1,1,n) q^n &= \sum_{n\geq 0} \fr{q^{n+1}}{1-q^{2n+1}} \label{C'11} \\
\sum_{n\geq 0}C'(2,1,n) q^n &= \fr{q}{(1-q)^2} \label{C'21}
\end{align}
\begin{equation}\label{C'22}
\sum_{n\geq 0}C'(2,2,n) q^n = \sum_{n\geq 0} \fr{q^{n+2}(1-q^{2n+2})}{(1-q)(1-q^{2n+3})}
=\fr{q^2}{(1-q)^2}-\sum_{n\geq 0}\fr{q^{3n+4}}{1-q^{2n+3}},
\end{equation}
and
\begin{equation}\label{C'31}
\sum_{n\geq 0}C'(3,1,n) q^n = \sum_{n\geq 0} \fr{q^{n+1}(1-q^{2n+3})}{(1-q)(1-q^3)}
=\fr{q(1+q+q^2-q^3)}{(1-q)(1-q^3)^2}.
\end{equation}
Then it is clear that for these values of $(k,m)$ the series $\textstyle \sum_{n\geq 0}C'(k,m,n) q^n$ is positive.
This leads to the following inequalities involving two-color partitions.
\begin{corollary}\label{cor ineqC1}
(a)\ For any nonnegative integer $n$ we have $C_1(1,1,n) \leq C_0(1,1,n)$.

(b)\ If $m \in\{1,2\}$, then for any positive integer $n$ we have $C_1(2,m,n) \leq C_0(2,m,n)$.

(c)\ For any nonnegative integer $n$ we have $C_1(3,1,n) \leq C_0(3,1,n)$.
\end{corollary}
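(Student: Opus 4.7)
The plan is to proceed directly from Theorem~\ref{thm C'}, specializing it to each pair $(k,m)$ and then checking positivity of the resulting series by inspection. Since $C'(k,m,n)=C_0(k,m,n)-C_1(k,m,n)$, the three claimed inequalities are equivalent to the positivity of $\sum_{n\geq 0}C'(k,m,n)q^n$ for the corresponding pairs, so no further partition-theoretic bookkeeping is needed beyond reading off each coefficient.

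I would dispose of the cases $(1,1)$ and $(2,2)$ via the $m\geq k$ branch of Theorem~\ref{thm C'}, and the cases $(2,1)$ and $(3,1)$ via the $m<k$ branch. In detail, substituting the right values produces the four explicit identities \eqref{C'11}--\eqref{C'31} already displayed in the excerpt. For $(1,1)$, each summand $q^{n+1}/(1-q^{2n+1})=\sum_{j\geq 0}q^{n+1+j(2n+1)}$ is a positive series, so the sum is positive. For $(2,1)$, the identity $q/(1-q)^2=\sum_{n\geq 0}(n+1)q^{n+1}$ is manifestly positive. For $(2,2)$, rewrite each summand of \eqref{C'22} as
\[
\fr{q^{n+2}(1-q^{2n+2})}{(1-q)(1-q^{2n+3})}=q^{n+2}\bigl(1+q+q^2+\cdots+q^{2n+1}\bigr)\sum_{j\geq 0}q^{j(2n+3)},
\]
a product of two positive series; hence each summand, and therefore the whole sum, is positive. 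For $(3,1)$ I would work with the sum form in \eqref{C'31} rather than the closed form (the closed form contains the minus sign $+q+q^2-q^3$, which masks positivity). Each summand factors as
\[
\fr{q^{n+1}(1-q^{2n+3})}{(1-q)(1-q^3)}=q^{n+1}\bigl(1+q+\cdots+q^{2n+2}\bigr)\sum_{j\geq 0}q^{3j},
\]
again a product of positive series, so positivity of the sum follows.

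Collecting these observations, $C'(k,m,n)\geq 0$ for $(k,m)\in\{(1,1),(2,1),(2,2),(3,1)\}$ and every $n$, which yields $C_1(k,m,n)\leq C_0(k,m,n)$ in all three parts of the corollary.

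The main obstacle is essentially absent here: Theorem~\ref{thm C'} already does the heavy lifting by converting the original alternating generating functions into expressions whose nonnegativity is visible term by term. The only point requiring care is that in the $(3,1)$ case one must avoid simplifying the geometric series into the closed form before checking positivity, since the telescoped numerator $1+q+q^2-q^3$ looks indefinite even though the underlying sum is positive.
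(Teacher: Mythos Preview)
Your proposal is correct and follows exactly the paper's approach: derive the identities \eqref{C'11}--\eqref{C'31} by specializing Theorem~\ref{thm C'} and then read off positivity. The paper merely asserts that positivity ``is clear'' from these identities, whereas you spell out the termwise verification; your factorizations for the $(2,2)$ and $(3,1)$ summands are valid and make the argument self-contained. (A minor remark: the closed form in \eqref{C'31} is also visibly positive once one writes $1+q+q^2-q^3=(1+q)+q^2(1-q)$, so the caution you flag is not strictly necessary, though your route via the sum form is certainly cleaner.)
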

We now deal with the case $(k,m)=(2,3)$.
\begin{theorem}\label{thm positiveC23}
The series $\textstyle \sum_{n\geq 0} C'(2,3,n)q^n$ is positive.
\end{theorem}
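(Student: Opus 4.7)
My plan is to start from the representation produced by Theorem~\ref{thm C'}, reorganize the resulting triple sum via a double-telescoping identity, and then finish with a partial-fraction split that makes positivity manifest. Applying Theorem~\ref{thm C'} with $(k,m)=(2,3)$ gives
\[
\sum_{n\geq 0}C'(2,3,n)q^n = \frac{q^3}{1-q}\sum_{n\geq 0}\frac{q^n(1-q^{2n+2})(1-q^{2n+4})}{(1-q^{2n+3})(1-q^{2n+5})}.
\]
Expanding $1/[(1-q^{2n+3})(1-q^{2n+5})]=\sum_{a,b\geq 0}q^{(2n+3)a+(2n+5)b}$ and swapping the order of summation brings the inner $n$-sum into the form $\sum_{n\geq 0}q^{n\alpha}(1-q^{2n+2})(1-q^{2n+4})$ with $\alpha=1+2(a+b)$.

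The inner sum is handled by the identity
\[
\sum_{n\geq 0}q^{n\alpha}(1-q^{2n+2})(1-q^{2n+4}) = \frac{(1-q^2)(1-q^4)}{(1-q^\alpha)(1-q^{\alpha+2})(1-q^{\alpha+4})},
\]
which comes from iterating the elementary telescoping $\sum_{n\geq 0}q^{n\beta}(1-q^{2n+2})=(1-q^2)/[(1-q^\beta)(1-q^{\beta+2})]$. Grouping the outer $(a,b)$-sum by $s:=a+b$ and using $\sum_{a+b=s}q^{3a+5b}=q^{3s}(1-q^{2s+2})/(1-q^2)$, the expression collapses to
\[
\sum_{n\geq 0}C'(2,3,n)q^n = q^3(1+q+q^2+q^3)\sum_{s\geq 0}\frac{q^{3s}(1-q^{2s+2})}{(q^{2s+1};q^2)_3}.
\]
At this point the problem has been reduced to a single-index sum, but positivity is still not apparent because the summand contains the sign-mixing factor $(1-q^{2s+2})$.

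The decisive step is a partial-fraction split that treats $q^{2s+1}$ as the variable: a residue computation gives
\[
\frac{1-q^{2s+2}}{(1-q^{2s+1})(1-q^{2s+3})} = \frac{1}{(1+q)(1-q^{2s+1})} + \frac{q}{(1+q)(1-q^{2s+3})}.
\]
Dividing by $1-q^{2s+5}$, multiplying by $q^{3s}$, and summing over $s$ produces an awkward factor $1/(1+q)$, but this is cancelled exactly by the algebraic identity $1+q+q^2+q^3=(1+q)(1+q^2)$. Spotting that the prefactor and the partial-fraction denominator are engineered to cancel is the main conceptual obstacle; once it is applied, the formula becomes
\[
\sum_{n\geq 0}C'(2,3,n)q^n = q^3(1+q^2)\sum_{s\geq 0}\frac{q^{3s}}{(1-q^{2s+1})(1-q^{2s+5})} + q^4(1+q^2)\sum_{s\geq 0}\frac{q^{3s}}{(1-q^{2s+3})(1-q^{2s+5})},
\]
and each of the two double sums on the right has manifestly non-negative coefficients, as do the multipliers $q^3(1+q^2)$ and $q^4(1+q^2)$, so the whole expression does as well, finishing the argument.
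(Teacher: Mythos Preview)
Your argument is correct.  Every algebraic step checks out: the telescoping identity
\[
\sum_{n\geq 0}q^{n\alpha}(1-q^{2n+2})(1-q^{2n+4})=\frac{(1-q^2)(1-q^4)}{(1-q^{\alpha})(1-q^{\alpha+2})(1-q^{\alpha+4})}
\]
is valid, the regrouping by $s=a+b$ is legitimate at the level of formal power series, the partial-fraction decomposition is accurate, and the cancellation $(1+q+q^2+q^3)=(1+q)(1+q^2)$ indeed removes the problematic $1/(1+q)$, leaving two visibly nonnegative series.

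Your route, however, is quite different from the paper's.  The paper works \emph{termwise}: it shows that for each fixed $n$ the coefficient of $q^N$ in
\[
\frac{q^3(1-q^{2n+2})(1-q^{2n+4})}{(1-q)(1-q^{2n+3})(1-q^{2n+5})}
\]
is nonnegative.  To do so it first establishes a four-term partial-fraction decomposition of this summand (with pieces $\frac{q}{1-q}$, $\frac{-q^2}{(1+q)(1-q^{2n+3})}$, $\frac{-q}{1-q^{2n+5}}$, $\frac{-q^3}{(1+q)(1-q^{2n+5})}$), computes the coefficient of $q^N$ in each piece via an arithmetic lemma about $1/[(1+q)(1-q^{2n+3})]$, and then rules out the possible ``$-1$'' cases by a parity/congruence analysis.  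By contrast you perform the $n$-sum first, collapse everything to a single index $s$, and apply a different partial-fraction split that renders positivity immediate.  Your argument is shorter and purely algebraic, with no case analysis; the paper's argument has the advantage of proving the stronger statement that each individual $n$-summand is already positive.
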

\begin{corollary}\label{cor ineqC23}
For any positive integer $n$ we have $C_1(2,3,n) \leq C_0(2,3,n)$.
\end{corollary}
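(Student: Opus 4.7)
The plan is to apply Theorem~\ref{thm C'} with $(k,m)=(2,3)$, perform a partial-fraction decomposition on the resulting inner sum, and reduce the positivity question to a coefficient inequality involving a Lambert-type series.

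Since $m=3\geq k=2$, Theorem~\ref{thm C'} yields
\[
\sum_{n\geq 0}C'(2,3,n)q^n=\frac{q^3}{1-q}\sum_{n\geq 0}\frac{q^n(1-q^{2n+2})(1-q^{2n+4})}{(1-q^{2n+3})(1-q^{2n+5})}.
\]
The first step is a partial-fraction decomposition of the summand, viewed as a rational function of $u=q^{2n+3}$. Since the numerator and denominator are each quadratic in $u$, a short computation gives
\[
\frac{(1-u/q)(1-uq)}{(1-u)(1-uq^2)}=\frac{1}{q^2}-\frac{1-q}{q(1+q)}\cdot\frac{1}{1-u}-\frac{(1-q)(1+q+q^2)}{q^2(1+q)}\cdot\frac{1}{1-uq^2}.
\]

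Next I would substitute $u=q^{2n+3}$ back and interchange sums. The resulting sums of the form $\sum_{n\geq 0}q^n/(1-q^{2n+5})$ can be reindexed via $n\mapsto n-1$ to merge with sums of the form $\sum_{n\geq 0}q^n/(1-q^{2n+3})$. After simplification, setting $T:=\sum_{n\geq 0}q^n/(1-q^{2n+3})$, one obtains the compact identity
\[
\sum_{n\geq 0}C'(2,3,n)q^n=\frac{1+q^2}{(1-q)(1-q^2)}-\frac{1+q+2q^2}{1+q}\,T.
\]
The rational part admits the explicit expansion with coefficient of $q^N$ equal to $2\lfloor N/2\rfloor+1$, while $T$ enjoys the standard Lambert-type interpretation: the coefficient of $q^N$ in $T$ equals $\tau(2N+3)-1$, where $\tau$ denotes the divisor function.

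Finally, positivity of $\sum_{n\geq 0}C'(2,3,n)q^n$ reduces to the coefficient-wise inequality
\[
[q^N]\Bigl(\frac{1+q+2q^2}{1+q}\,T\Bigr)\;\leq\;2\lfloor N/2\rfloor+1\qquad(N\geq 0).
\]
Expanding $(1+q+2q^2)/(1+q)=1+2q^2-2q^3+2q^4-\cdots$, the left-hand coefficient equals $t_N+2(-1)^N\sum_{k=0}^{N-2}(-1)^k t_k$ where $t_k=\tau(2k+3)-1$. The main obstacle will be this last step: although the right-hand side grows linearly in $N$, the left-hand side is an oscillating weighted sum of divisor-function values whose behavior is irregular. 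A careful accounting using parity considerations (separating even and odd $N$) together with standard bounds on $\tau$ should suffice, possibly combined with induction on $N$. Alternatively, one might seek a manifestly positive expansion by combining the rational part with the Lambert series into a single non-negative series, or give a combinatorial proof via a sign-reversing involution on the set $\mathcal{C}(2,3,n)$.
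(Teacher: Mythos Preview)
Your reduction is correct: the partial-fraction identity in $u=q^{2n+3}$ checks out, and summing over $n$ does give
\[
\sum_{n\geq 0}C'(2,3,n)q^n=\frac{1+q^2}{(1-q)(1-q^2)}-\frac{1+q+2q^2}{1+q}\,T,\qquad T=\sum_{n\geq 0}\frac{q^n}{1-q^{2n+3}},
\]
with $[q^N]T=\tau(2N+3)-1$. The gap is the final step, which you explicitly leave open. The phrase ``standard bounds on $\tau$'' is not enough: pointwise bounds like $\tau(m)=O(m^\varepsilon)$ control $t_N$ but say nothing about the signed partial sum $\sum_{k\le N-2}(-1)^k t_k$. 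To show that this alternating sum is $o(N)$ you would need something like $\sum_{m\le x}\chi_{-4}(m)\tau(m)=O(x^{1/2+\varepsilon})$, i.e.\ a genuine character--divisor estimate, followed by a finite computer check for small $N$. None of ``parity considerations'', ``induction on $N$'', or an unspecified ``sign-reversing involution'' is a proof; as written, the argument stops exactly where the difficulty begins.

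The paper's route avoids this entirely by not summing over $n$. It shows that for each fixed $n$ the single term
\[
\frac{q^{n+3}(1-q^{2n+2})(1-q^{2n+4})}{(1-q)(1-q^{2n+3})(1-q^{2n+5})}
\]
already has nonnegative coefficients. The key is a \emph{different} partial-fraction decomposition (Lemma~\ref{lem-2 C23}),
\[
\frac{q}{1-q}-\frac{q^2}{(1+q)(1-q^{2n+3})}-\frac{q}{1-q^{2n+5}}-\frac{q^3}{(1+q)(1-q^{2n+5})},
\]
so that the coefficient of $q^N$ is $1-T_1-T_2-T_3$ with each $T_i\in\{-1,0,1\}$ determined by simple congruence conditions on $N$ modulo $2n+3$ and $2n+5$; a short case analysis then rules out $T_1+T_2+T_3\geq 2$. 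This is entirely elementary and finite, whereas your global approach trades that structure for a divisor-sum inequality that needs real analytic input to close. If you want to salvage your line, the cleanest fix is to undo the summation and prove positivity termwise, which is exactly the paper's Lemma~\ref{lem-1 C23}/Corollary~\ref{cor-1 C23} argument.
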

We now state our second main result where we handle the case $(k,m)=(4,1)$.
\begin{theorem}\label{thm positiveC41}
The series $\textstyle \sum_{n\geq 0} C'(4,1,n)q^n$ is positive.
\end{theorem}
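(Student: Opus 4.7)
The plan is to invoke Theorem~\ref{thm C'} and then settle positivity by an elementary congruence argument on the resulting series. Since $m=1<k=4$, the second case of Theorem~\ref{thm C'} gives
\[
\sum_{n\geq 0} C'(4,1,n)\, q^n \;=\; \frac{q}{(q;q^2)_3}\sum_{n\geq 0}q^{n}\,(1-q^{2n+3})(1-q^{2n+5}).
\]
The prefactor $\frac{1}{(q;q^2)_3}=\frac{1}{(1-q)(1-q^3)(1-q^5)}$ is the generating function for partitions into parts from $\{1,3,5\}$ and hence has non-negative coefficients. Consequently, the problem reduces to showing that the inner series
\[
S(q)\;:=\;\sum_{n\geq 0}q^{n}(1-q^{2n+3})(1-q^{2n+5})
\]
has non-negative coefficients; the positivity of $\sum_{n\geq 0}C'(4,1,n)\,q^n$ then follows by multiplication.

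The next step is to expand $(1-q^{2n+3})(1-q^{2n+5})=1-q^{2n+3}-q^{2n+5}+q^{4n+8}$ and sum each resulting monomial over $n\geq 0$, producing the closed form
\[
S(q)\;=\;\frac{1}{1-q}\;-\;\frac{q^{3}}{1-q^{3}}\;-\;\frac{q^{5}}{1-q^{3}}\;+\;\frac{q^{8}}{1-q^{5}}.
\]
For any integer $N\geq 0$ the coefficient of $q^N$ in $S(q)$ is therefore
\[
[q^{N}]S(q)\;=\;1\;-\;\mathbf{1}[N\in A]\;-\;\mathbf{1}[N\in B]\;+\;\mathbf{1}[N\in C],
\]
where $A=\{3n+3:n\geq 0\}$, $B=\{3n+5:n\geq 0\}$, and $C=\{5n+8:n\geq 0\}$ (Iverson bracket notation).

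The decisive observation---and the only real step requiring insight---is a congruence dichotomy: every element of $A$ is $\equiv 0\pmod 3$, while every element of $B$ is $\equiv 2\pmod 3$, so $A\cap B=\emptyset$. At any given $N$ at most one of the two subtractions can fire, and the unconditional $+1$ coming from $\sum_{n\geq 0}q^n$ always absorbs it; the third, positive term $+q^{8}/(1-q^{5})$ is not even needed to rescue positivity. Hence $[q^{N}]S(q)\geq 0$ for every $N\geq 0$, which delivers the theorem. The anticipated obstacle is thus not a genuine one: once Theorem~\ref{thm C'} is applied, the argument is a short mod-$3$ pigeonhole observation rather than a $q$-series identity.
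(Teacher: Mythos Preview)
Your proof is correct, and it takes a genuinely different route from the paper's. Both start from Theorem~\ref{thm C'} to reach
\[
\sum_{n\geq 0}C'(4,1,n)q^n=\frac{q}{(q;q^2)_3}\sum_{n\geq 0}q^n(1-q^{2n+3})(1-q^{2n+5}),
\]
but then diverge. The paper establishes positivity \emph{term by term}: for each fixed $n$ it shows that $\dfrac{(1-q^{2n+3})(1-q^{2n+5})}{(1-q)(1-q^3)(1-q^5)}$ has nonnegative coefficients, via a case split on whether $3\mid 2n+3$ or $3\mid 2n+5$, and in the remaining case it rewrites $1-q^{2n+3}=(1-q^{2n-2})+q^{2n-2}(1-q^5)$ to exhibit a positive decomposition. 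Your approach instead sums over $n$ first, obtaining the closed form
\[
S(q)=\frac{1}{1-q}-\frac{q^3}{1-q^3}-\frac{q^5}{1-q^3}+\frac{q^8}{1-q^5},
\]
and then notes that the two negative pieces are supported on disjoint residue classes mod~$3$, so the constant $+1$ from $1/(1-q)$ absorbs them. This actually proves a slightly stronger statement than the paper needs---$S(q)$ itself is positive, without any help from the prefactor $1/(q;q^2)_3$---and the argument is shorter, avoiding the algebraic case analysis. The paper's term-by-term method, on the other hand, has the advantage of being a template that might adapt to other $(k,m)$ where summing over $n$ does not yield such a clean closed form.
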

\begin{corollary}\label{cor ineqC41}
For any positive integer $n$ we have $C_1(4,1,n) \leq C_0(4,1,n)$.
\end{corollary}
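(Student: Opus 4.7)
The plan is to invoke Theorem~\ref{thm C'} with $k=4$, $m=1$, which (since $m<k$) gives
\[
\sum_{n\geq 0} C'(4,1,n)\, q^n \;=\; \fr{q}{(q;q^2)_3} \sum_{n\geq 0} q^n (q^{2n+3};q^2)_2,
\]
using $(q^{2n+2};q^2)_{0}=1$. The prefactor $\fr{q}{(q;q^2)_3}=\fr{q}{(1-q)(1-q^3)(1-q^5)}$ is already a positive series, so it suffices to show that the inner sum
\[
S(q) \;:=\; \sum_{n\geq 0} q^n (1-q^{2n+3})(1-q^{2n+5})
\]
has non-negative coefficients.

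Expanding $(1-q^{2n+3})(1-q^{2n+5}) = 1 - q^{2n+3} - q^{2n+5} + q^{4n+8}$ and summing the four resulting geometric series term by term yields the closed form
\[
S(q) \;=\; \fr{1}{1-q} \;-\; \fr{q^3}{1-q^3} \;-\; \fr{q^5}{1-q^3} \;+\; \fr{q^8}{1-q^5}.
\]
The crucial observation is that the two subtracted series are supported respectively on the arithmetic progressions $N\equiv 0\Mod{3}$ (with $N\geq 3$) and $N\equiv 2\Mod{3}$ (with $N\geq 5$), which are disjoint. Thus for any $N$ at most one of the two subtracted indicators can fire, while $\fr{1}{1-q}$ contributes $+1$ at every index. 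Hence every coefficient of $S(q)$ is already $\geq 1-1=0$, with the $\fr{q^8}{1-q^5}$ term only adding further non-negative mass. Multiplying by the positive prefactor then proves the theorem.

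The step that requires any real thought is spotting that the elementary expansion of the short product $(q^{2n+3};q^2)_2$ is all that is needed: once it is written out, positivity collapses to the disjointness of two residue classes modulo~$3$, with no further $q$-series identities required. This is in sharp contrast to the $m\geq k$ branch of Theorem~\ref{thm C'}, where the $n$-dependent denominator $(q^{2n+2k-1};q^2)_{m-k+1}$ couples $n$ to the outer variables and blocks any such geometric-series reduction.
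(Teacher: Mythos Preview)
Your argument is correct. Both you and the paper start from the same application of Theorem~\ref{thm C'}, arriving at
\[
\sum_{n\geq 0}C'(4,1,n)q^n=\frac{q}{(1-q)(1-q^3)(1-q^5)}\sum_{n\geq 0}q^n(1-q^{2n+3})(1-q^{2n+5}),
\]
but you then diverge. The paper keeps the $n$-th summand intact and proves, for each fixed $n$, that $\dfrac{(1-q^{2n+3})(1-q^{2n+5})}{(1-q)(1-q^3)(1-q^5)}$ has nonnegative coefficients; this requires a case split according to which of $2n+3$, $2n+5$ is divisible by $3$, and in the residual case a rewriting via $1-q^{2n+3}=(1-q^{2n-2})+q^{2n-2}(1-q^5)$. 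You instead sum the geometric series first, obtaining the closed form $S(q)=\dfrac{1}{1-q}-\dfrac{q^3}{1-q^3}-\dfrac{q^5}{1-q^3}+\dfrac{q^8}{1-q^5}$, and then positivity follows in one line from the observation that the two subtracted pieces are supported on disjoint residue classes modulo~$3$. Your route is shorter and avoids the casework; the paper's route, on the other hand, proves the slightly stronger term-by-term statement that each summand (after dividing by the prefactor) is already positive. Either yields the corollary.
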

We close this section by the following positivity conjectures.
\begin{conjecture}\label{conj positiveCk1}
For any nonnegative integer $k$, the series $\textstyle \sum_{n\geq 0} C'(k,1,n)q^n$ is positive.
\end{conjecture}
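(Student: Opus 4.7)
The plan is to reduce the positivity of $\sum_{n\geq 0}C'(k,1,n)q^n$ to a finite, terminating $q$-series and then look for the required cancellation. Invoking Theorem~\ref{thm C'} with $m=1<k$ (and leaving the case $k=1$ to~\eqref{C'11}), one has for $k\geq 2$
\[
F_k(q) := \sum_{n\geq 0}C'(k,1,n)q^n = \frac{q}{(q;q^2)_{k-1}}\sum_{n\geq 0} q^n (q^{2n+3};q^2)_{k-2}.
\]
Since both $\frac{(q^{2n+3};q^2)_{k-2}}{(q;q^2)_{k-1}}$ and $\frac{(q^{2k-1};q^2)_n}{(q;q^2)_{n+1}}$ equal $\frac{(q;q^2)_{n+k-1}}{(q;q^2)_{n+1}(q;q^2)_{k-1}}$, this rewrites as $F_k(q) = \sum_{n\geq 0}\frac{q^{n+1}(q^{2k-1};q^2)_n}{(q;q^2)_{n+1}}$. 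Expanding $(q^{2k-1};q^2)_n$ by the $q$-binomial theorem and swapping the order of summation, and using the elementary symmetric polynomial evaluation $e_j(q^3,q^5,\ldots,q^{2k-3}) = q^{j^2+2j}\binom{k-2}{j}_{q^2}$, gives the finite alternating form
\[
F_k(q) = \frac{q}{(q;q^2)_{k-1}}\sum_{j=0}^{k-2}\frac{(-1)^j q^{j^2+2j}}{1-q^{2j+1}}\binom{k-2}{j}_{q^2},
\]
while Heine's transformation on the underlying ${}_2\phi_1$ yields the equivalent closed form $F_k(q) = \frac{q}{(1-q)(q;q^2)_{k-1}}\,{}_2\phi_1(q^{4-2k},q;q^3;q^2,q^{2k-1})$, in which the hypergeometric factor terminates after $k-1$ terms.

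Having localized the problem to a finite alternating sum of $k-1$ terms, I would try two complementary strategies. On the hypergeometric side, I would search for a Bailey-type or $q$-Lauricella transformation that converts the $k-1$ signed summands into a single sum with manifestly nonnegative coefficients in $q$. On the inductive side, the identity $(q^{2k+1};q^2)_n = (q^{2k-1};q^2)_{n+1}/(1-q^{2k-1})$ produces the recurrence
\[
(1-q^{2k-1})F_{k+1}(q) = F_k(q) - q^{2k-1}\sum_{n\geq 0}\frac{q^{3n+1}(q^{2k-1};q^2)_n}{(q;q^2)_{n+1}},
\]
suggesting a joint induction that simultaneously controls $F_k(q)$ and the auxiliary tail series on the right-hand side. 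In parallel, a combinatorial route would be to construct a sign-reversing involution on the two-color partitions in $\mathcal{C}(k,1,n)$ that cancels partitions with an odd number of even parts against those with an even number, leaving an explicit positive residue.

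The hard part is precisely the positivity step itself. In each of the proven cases $k\leq 4$, the requisite cancellation is delicate and no obvious uniform pattern is visible in $k$; moreover, the positivity is a genuinely subtle phenomenon, since $\sum_{n\geq 0}C'(2,m,n)q^n$ already fails to be positive for $m\geq 5$, so any valid argument must decisively exploit the specific choice $m=1$. Identifying the right transformation or combinatorial bijection that reveals this cancellation uniformly in $k$ is the principal obstacle.
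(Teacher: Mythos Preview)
This statement is labeled a \emph{conjecture} in the paper, not a theorem; the paper offers no proof of it. The authors only verify the cases $k\le 4$ individually (via \eqref{C'11}, \eqref{C'21}, \eqref{C'31}, and Theorem~\ref{thm positiveC41}) and then state Conjecture~\ref{conj positiveCk1} for general $k$. There is therefore nothing in the paper against which your attempt can be compared.

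More to the point, what you have submitted is not a proof but a research outline. The algebraic manipulations in your first two paragraphs are correct and potentially useful: the rewriting of $F_k(q)$ as $\sum_{n\ge 0} q^{n+1}(q^{2k-1};q^2)_n/(q;q^2)_{n+1}$ is valid, as is the finite alternating expansion via elementary symmetric functions. But from ``I would try two complementary strategies'' onward the text is programmatic rather than deductive: you propose to \emph{search} for a Bailey-type transformation, you write down a recurrence without analyzing whether it propagates positivity, and you invoke a hoped-for sign-reversing involution without constructing one. You yourself concede in the final paragraph that ``the hard part is precisely the positivity step itself'' and that identifying the key mechanism ``is the principal obstacle.'' That is an honest assessment, and it means your proposal leaves the conjecture exactly where the paper leaves it: open.
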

%
\begin{conjecture}\label{conj positiveC24}
The series $\textstyle \sum_{n\geq 0} C'(2,4,n)q^n$ is positive.
\end{conjecture}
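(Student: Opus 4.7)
The plan is to start from the explicit form supplied by Theorem~\ref{thm C'}. Since $m=4\geq k=2$, that theorem yields
\[
\sum_{n\geq 0} C'(2,4,n)\, q^n = \fr{q^4}{1-q}\sum_{n\geq 0} \fr{q^n\,(q^{2n+2};q^2)_3}{(q^{2n+3};q^2)_3},
\]
so, since $q^4/(1-q)$ is manifestly a positive power series, the task reduces to establishing positivity of the inner sum
\[
S(q) := \sum_{n\geq 0} q^n\,\fr{(1-q^{2n+2})(1-q^{2n+4})(1-q^{2n+6})}{(1-q^{2n+3})(1-q^{2n+5})(1-q^{2n+7})}.
\]

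My first move would be to invoke the elementary identity $\fr{1-q^{2n+2j}}{1-q^{2n+2j+1}} = 1 - \fr{q^{2n+2j}(1-q)}{1-q^{2n+2j+1}}$ for $j=1,2,3$ and expand the triple product, rewriting the summand as $q^n$ minus three single Lambert-type corrections, plus three double Lambert-type corrections, minus one triple Lambert-type correction. This would be in the same spirit as the rearrangement yielding $\sum_n C'(2,2,n)q^n = q^2/(1-q)^2 - \sum_n q^{3n+4}/(1-q^{2n+3})$ in~\eqref{C'22}, and the analogous derivation in~\eqref{C'31}. The leading $1$ then contributes $\fr{1}{1-q}$ to $S(q)$, and the residual sums have the shape $\sum_{n\geq 0} q^{an+b}/\prod_i (1-q^{2n+c_i})$ with signs $-,-,-,+,+,+,-$. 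To expose positivity I would expand each factor $1/(1-q^{2n+c})$ as a geometric series $\sum_{\ell\geq 0} q^{\ell(2n+c)}$, interchange orders of summation, and read off the coefficient of each $q^N$ in the resulting multiple sum, with the goal of exhibiting an injection between the negative and positive contributions at every weight $N$.

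The principal obstacle lies precisely in this regrouping. In the $(2,2)$, $(3,1)$, and even $(2,3)$ cases treated above, the analogous expansion produces at most a handful of Lambert-type sums whose mutual cancellation can be arranged by hand, but in the $(2,4)$ case the triple-product structure produces seven distinct residual sums, and the triple cross-term
\[
-(1-q)^3\sum_{n\geq 0} \fr{q^{7n+12}}{(1-q^{2n+3})(1-q^{2n+5})(1-q^{2n+7})}
\]
does not telescope in any obvious way against the lower-order double sums. Should the direct analytic regrouping prove too rigid, a combinatorial fallback would be to attack the equivalent partition inequality $C_1(2,4,n) \leq C_0(2,4,n)$ by constructing a sign-reversing involution on $\mathcal{C}(2,4,n)$ tailored to the defining constraints (smallest part odd, appearing at least four times in blue; even blue parts at least $3$ above the smallest; even parts distinct within each color); such an involution would pair partitions whose numbers of even parts have opposite parities, leaving a positive defect. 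Whether one proceeds analytically, perhaps aided by a ${}_2\phi_1$ transformation or a Bailey-type pair, or combinatorially, pinning down the required cancellation mechanism is the crux of Conjecture~\ref{conj positiveC24}.
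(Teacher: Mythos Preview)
This statement is labelled a \emph{conjecture} in the paper, and the paper offers no proof of it; there is therefore no argument on the authors' side to compare against. Your proposal is likewise not a proof: it is a programme. You correctly invoke Theorem~\ref{thm C'} to reduce to positivity of the inner sum $S(q)$, and your expansion via $\frac{1-q^{2n+2j}}{1-q^{2n+2j+1}} = 1 - \frac{q^{2n+2j}(1-q)}{1-q^{2n+2j+1}}$ is a reasonable opening move analogous to what the paper does for $(k,m)=(2,3)$ in Section~\ref{sec proof thm positiveC23}. But you yourself identify the gap: the seven residual Lambert-type sums, and in particular the triple cross-term, do not visibly telescope, and you offer no mechanism---analytic or bijective---that actually carries out the needed cancellation. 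The closing sentence of your proposal concedes exactly this.

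So the honest assessment is that your write-up is a plausible outline of where a proof might begin, consonant with the paper's methods for the smaller cases, but it does not close the conjecture, and neither does the paper. If you wish to pursue this, the $(2,3)$ proof in Section~\ref{sec proof thm positiveC23} suggests that a partial-fraction decomposition of the summand followed by a coefficient-by-coefficient parity analysis (in the style of Lemma~\ref{lem-1 C23} and Corollary~\ref{cor-1 C23}) is more tractable than the raw seven-term expansion you describe; whether that analysis can be pushed through for three denominator factors rather than two is precisely the open question.
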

%
\section{Two-color partitions with even smallest part}\label{sec even-spt}
We now focus on the series $\sum_{n\geq 0}D'(k,m,n)q^n$ as in~\eqref{main series2}.
We start with the following natural interpretation for $D'(k,m,n)$ as two-color partitions.
\begin{definition}
Let $k$ and $m$ be fixed positive integers. For a positive integer $n$, let $\mathcal{D}(k,m,n)$ denote the set of two-color partitions $\pi(n)$ of $n$
in which:
\begin{itemize}
\item the smallest part $s(\pi)$ is even, blue, and occurs exactly $m$ times,
\item the even blue parts are at least $2k$ more than $s(\pi)$,
\item the even parts of the same color are distinct.
\end{itemize}
By letting $q^{2n+2}\fr{(q^{2n+2+2k};q^2)_\infty}{(q^{2n+3};q^2)_\infty}$ generate the blue parts and
$\fr{(q^{2n+4};q^2)_\infty}{(q^{2n+3};q^2)_\infty}$ generate the green parts, we have
\[
\sum_{n\geq 0} \mathcal{D}(k,m,n)
=\sum_{n\geq 0} q^{m(2n+2)} \fr{(-q^{2n+4},-q^{2n+2+2k};q^2)_\infty}{(q^{2n+3};q^2)_\infty^2}.
\]
Let $D_0(k,m,n)$ (resp. $D_1(k,m,n)$) be the
number of partitions from $\mathcal{D}(k,m,n)$ wherein the number of even parts that are greater than $s(\pi)$ is even (resp. odd).
and let $D'(k,m,n)= D_0(k,m,n)-D_1(k,m,n)$.
Then it is easily seen that
\begin{equation}\label{gen D'}
\sum_{n\geq 0}  \big( D_0(k,m,n)-D_1(k,m,n) \big) q^n = \sum_{n\geq 0} q^{m(2n+2)} \fr{(q^{2n+4},q^{2n+2+2k};q^2)_\infty}{(q^{2n+3};q^2)_\infty^2}.
\end{equation}
That is,
\[
D'(k,m,n)= D_0(k,m,n)-D_1(k,m,n),
\]
from which it follows that positivity of the series $\sum_{n\geq 0} D'(k,m,n) q^n$ means the partition inequality
$D_1(k,m,n) \leq D_0(k,m,n)$.
\end{definition}
We will establish the following connection between $D'(k,m,n)$ and $C'(k,m,n)$.
\begin{theorem}\label{thm D'}
Let $k$ and $m$ be positive integers. Then
\[
\sum_{n\geq 0}D'(k,m,n) q^n
= q^{-m}\sum_{n\geq 0}C'(k,m,n) q^n -\fr{(q^2,q^{2k};q^2)_\infty}{(q;q^2)_\infty^2}.
\]
\[
=\begin{cases}
\fr{1}{(q;q^2)_{k-1}} \displaystyle \sum_{n\geq 0}\fr{q^n (q^{2n+2};q^2)_{m-1}}{(q^{2n+2k-1};q^2)_{m-k+1}}
-\fr{(q^2,q^{2k};q^2)_\infty}{(q;q^2)_\infty^2}
&\ \text{if $m\geq k$} \\
\fr{1}{(q;q^2)_{k-1}} \displaystyle \sum_{n\geq 0} q^n (q^{2n+2};q^2)_{m-1}(q^{2n+2m+1};q^2)_{k-m-1}
-\fr{(q^2,q^{2k};q^2)_\infty}{(q;q^2)_\infty^2}
&\ \text{if $m<k$}.
\end{cases}
\]
\end{theorem}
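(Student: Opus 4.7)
The plan is to derive the first displayed identity of the theorem directly from the defining series \eqref{main series1} and \eqref{main series2}, and then obtain the case-split formula by plugging in Theorem~\ref{thm C'}.

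First I would peel off the $n=0$ summand from the series in \eqref{main series1}; this contributes the explicit piece
\[
q^m\,\fr{(q^2,q^{2k};q^2)_\infty}{(q;q^2)_\infty^2}.
\]
For the remaining tail $\sum_{n\geq 1}$ I would reindex by $n \mapsto n+1$. Under this shift the exponent $m(2n+1)$ becomes $m(2n+3) = m + m(2n+2)$, while the $q$-Pochhammer factors transform as
\[
(q^{2n+2},q^{2n+2k};q^2)_\infty \longmapsto (q^{2n+4},q^{2n+2k+2};q^2)_\infty, \quad (q^{2n+1};q^2)_\infty^2 \longmapsto (q^{2n+3};q^2)_\infty^2.
\]
These are precisely the factors appearing in the summand of \eqref{main series2}, so the shifted tail equals $q^m \sum_{n\geq 0} D'(k,m,n) q^n$. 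Collecting the two pieces gives
\[
\sum_{n\geq 0} C'(k,m,n)q^n = q^m \fr{(q^2,q^{2k};q^2)_\infty}{(q;q^2)_\infty^2} + q^m \sum_{n\geq 0} D'(k,m,n) q^n,
\]
and solving for the $D'$-series establishes the first identity.

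The case-split identity requires no further work: substituting the two branches of Theorem~\ref{thm C'} into $q^{-m}\sum_{n\geq 0} C'(k,m,n)q^n$, the factor $q^m$ appearing in both cases cancels against $q^{-m}$, leaving exactly the two expressions displayed in the theorem once the product $(q^2,q^{2k};q^2)_\infty/(q;q^2)_\infty^2$ is subtracted.

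I do not anticipate any serious obstacle: the entire argument is an elementary index shift combined with a direct appeal to Theorem~\ref{thm C'}. The only point that must be handled carefully is the alignment of the $q$-Pochhammer factors after the shift, which is a routine check.
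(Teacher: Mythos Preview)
Your proposal is correct and follows essentially the same idea as the paper: both arguments recognize that the $D'$-series is the $C'$-series with the summation index shifted by one, so they differ only by the $n=0$ term (times $q^{-m}$). Your version is in fact a bit more streamlined, since you perform the shift directly on the defining series~\eqref{main series1} rather than first rewriting it via~\eqref{basic} as a sum of finite $q$-Pochhammer ratios and shifting there, as the paper does.
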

Our first application of Theorem~\ref{thm D'} deals with $(k,m)=(2,1)$.
\begin{theorem}\label{thm positiveD21}
The series $\textstyle \sum_{n\geq 0} D'(2,1,n)q^n$ is positive.
\end{theorem}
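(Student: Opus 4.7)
The plan is to first apply Theorem~\ref{thm D'} at $(k,m)=(2,1)$. Since $m=1<2=k$, the second case applies, and both truncated Pochhammer factors $(q^{2n+2};q^2)_{m-1}$ and $(q^{2n+2m+1};q^2)_{k-m-1}$ collapse to empty products. The formula then reduces to
\[
\sum_{n\geq 0}D'(2,1,n)\,q^n\;=\;\frac{1}{(1-q)^2}\;-\;\frac{(q^2,q^4;q^2)_\infty}{(q;q^2)_\infty^2}.
\]
Next, writing $(q^4;q^2)_\infty=(q^2;q^2)_\infty/(1-q^2)$ and invoking Gauss's identity $(q^2;q^2)_\infty/(q;q^2)_\infty=\sum_{n\geq 0}q^{n(n+1)/2}=:\psi(q)$, the subtracted product simplifies to $\psi(q)^2/(1-q^2)$. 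Hence the theorem is equivalent to the coefficient-wise inequality
\[
\frac{1}{(1-q)^2}\;\geq\;\frac{\psi(q)^2}{1-q^2},
\]
or combinatorially to $R(n)\leq n+1$, where $R(n)$ counts triples $(a,b,c)\in\mathbb{Z}_{\geq 0}^3$ with $T_a+T_b+2c=n$ and $T_j:=j(j+1)/2$.

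To prove $R(n)\leq n+1$, I would construct a weight-preserving injection from these triples into the $(n+1)$-element set $\{(x,y)\in\mathbb{Z}_{\geq 0}^2:x+y=n\}$. The natural candidate $(a,b,c)\mapsto(T_a+c,T_b+c)$ automatically lands on the diagonal $x+y=n$, but is not injective: every triple in the collision class $T_a-T_b=d$ collapses to the single pair $(\tfrac{n+d}{2},\tfrac{n-d}{2})$. The remedy is to sort the colliding triples by $T_a$ and reassign the overflow to pairs $(x,y)$ not already in the image, using the fact that consecutive triangular numbers have strictly increasing gaps $T_{m+1}-T_m=m+1$ to supply enough fresh target slots.

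The main obstacle is executing this redistribution consistently and verifying that no two triples ever land at the same pair, i.e., showing that the total excess of the natural map is always absorbed by the slack $n+1-|\operatorname{Im}|$. Should the combinatorial bookkeeping prove too intricate, my backup plan is a direct $q$-series attack: apply Heine's ${}_2\phi_1$ transformation, in base $q^2$, to the defining sum
\[
\sum_{n\geq 0}\frac{q^{2n+2}(q^{2n+4},q^{2n+6};q^2)_\infty}{(q^{2n+3};q^2)_\infty^2},
\]
re-expressing it as a $q$-hypergeometric series whose term-wise positivity is manifest, and then match coefficients with $\frac{1}{(1-q)^2}-\psi(q)^2/(1-q^2)$. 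Either route hinges on controlling the cancellation between the two competing generating functions, which is where the technical heart of the proof lies.
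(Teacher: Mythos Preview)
Your reduction is correct and coincides exactly with the paper's: applying Theorem~\ref{thm D'} at $(k,m)=(2,1)$ and Gauss's identity yields
\[
\sum_{n\geq 0}D'(2,1,n)\,q^n=\frac{1}{(1-q)^2}-\frac{1}{1-q^2}\,\psi(q)^2,
\]
so the statement is equivalent to $\sum_{j\geq 0}t_2(n-2j)\leq n+1$, where $t_2(n)$ counts ordered representations of $n$ as a sum of two triangular numbers. Your $R(n)$ is precisely this sum.

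From that point on, however, your proposal does not yet contain a proof. The map $(a,b,c)\mapsto(T_a+c,T_b+c)$ is not injective, as you observe, and the proposed repair --- sorting each collision class $T_a-T_b=d$ and reassigning overflow to unused pairs --- is exactly where the real content lies. The growth of the gaps $T_{m+1}-T_m=m+1$ does not by itself guarantee that the \emph{total} overflow, summed over all $d$, fits inside the slack $n+1-|\mathrm{Im}|$; bounding that total is essentially equivalent to the inequality you want. Your backup plan via Heine is too unspecific to evaluate: there is no evident ${}_2\phi_1$ transformation that renders the defining sum for $D'(2,1,n)$ term-wise nonnegative, and the paper does not find one either.

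The paper closes the gap by an entirely different, analytic argument. Writing $T_a+T_b=z$ as $(2a+1)^2+(2b+1)^2=8z+2$, the sum $\sum_j t_2(N-2j)$ becomes the count of odd lattice points $(x,y)$ inside the circle $X^2+Y^2=16N+2$ with $x^2+y^2\equiv 2\pmod{16}$. These points form a periodic pattern with exactly eight in each $8\times 8$ block, so a crude Gauss-circle area estimate (covering the quarter-disc by $8\times 8$ squares and enlarging the radius by a diagonal $8\sqrt{2}$) gives
\[
\sum_{j\geq 0}t_2(N-2j)\;\leq\;\frac{\pi}{2}N+\frac{65\pi}{16}+2\sqrt{2N+\tfrac14},
\]
which is $<N+1$ for all $N\geq 90$; the remaining finitely many cases are checked directly. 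If you wish to pursue a bijective proof, the redistribution step needs a genuinely new idea; otherwise, the circle-problem estimate above is what actually finishes the argument.
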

\begin{corollary}\label{cor ineqD21}
For any positive integer $n$ we have $D_1(2,1,n) \leq D_0(2,1,n)$.
\end{corollary}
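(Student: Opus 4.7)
The plan is to specialize Theorem~\ref{thm D'} to $(k,m)=(2,1)$ and combine with~\eqref{C'21}, which together give
\[
\sum_{n\geq 0} D'(2,1,n)\, q^n = \frac{1}{(1-q)^2} - \frac{(q^2,q^4;q^2)_\infty}{(q;q^2)_\infty^2}.
\]
Using $(q^4;q^2)_\infty = (q^2;q^2)_\infty/(1-q^2)$ and the classical Gauss identity $\psi(q) := (q^2;q^2)_\infty/(q;q^2)_\infty = \sum_{n\geq 0} q^{n(n+1)/2}$, the second term simplifies to $\psi(q)^2/(1-q^2)$, reducing the theorem to the coefficient-wise inequality $1/(1-q)^2 \geq \psi(q)^2/(1-q^2)$.

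Writing $r(m) := [q^m]\psi(q)^2$ for the number of ordered representations of $m$ as a sum of two triangular numbers, this translates into the combinatorial bound
\[
R(n) := \sum_{0\leq k,\, 2k\leq n} r(n-2k) = \#\bigl\{(a,b,k) \in \mathbb{Z}_{\geq 0}^{\,3} : T_a+T_b+2k = n\bigr\} \leq n+1,
\]
for every $n \geq 0$, where $T_i = i(i+1)/2$. I note that a naive attempt to clear $1/(1-q^2)$ on both sides to obtain the stronger inequality $(1+q)/(1-q) \geq \psi(q)^2$ fails already at $n=6$, since $r(6)=3>2$; the smoothing by $1/(1-q^2)$ is therefore essential.

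To prove $R(n) \leq n+1$, my plan is to exhibit an injection from the counted triples into $\{0, 1, \ldots, n\}$. A first candidate is $(a,b,k) \mapsto T_a + k$, which lies in $\{0, \ldots, n\}$ since $T_a + k \leq T_a + T_b + 2k = n$; on ``off-diagonal'' triples with $a < b$ it takes values in $\{0, \ldots, \lfloor n/2\rfloor - 1\}$, while the symmetric map $(a,b,k) \mapsto n - T_b - k$ sends the $a > b$ triples into $\{\lceil n/2\rceil + 1, \ldots, n\}$. The diagonal triples $(a,a,k)$, which all satisfy $T_a + k = n/2$ (possible only for $n$ even), are then routed to the gaps left by these two partial maps. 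The main obstacle is that the candidate map is not always injective on each off-diagonal half: already at $n = 36$, the triples $(0, 3, 15)$ and $(5, 6, 0)$ both satisfy $a < b$ and map to $15$, with such collisions governed by representations of integers as differences of two triangular numbers. Handling these collisions — likely via a secondary invariant to break ties, or a combined injection/involution argument — together with verifying the tight equality cases at $n = 0, 1, 3, 6$ (where $R(n) = n+1$), will complete the proof.
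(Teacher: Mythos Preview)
Your reduction is exactly the one the paper uses: specializing Theorem~\ref{thm D'} at $(k,m)=(2,1)$ and invoking the Gauss identity yields
\[
\sum_{n\geq 0} D'(2,1,n)\,q^n=\fr{1}{(1-q)^2}-\fr{1}{1-q^2}\,\psi(q)^2,
\]
so the corollary is equivalent to $R(n)=\sum_{0\leq 2k\leq n} t_2(n-2k)\leq n+1$ for all $n\geq 0$, precisely the inequality~\eqref{key-sum} in the paper. Up to this point there is nothing to criticize.

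The gap is in the second half. You propose to construct an injection from the triples $(a,b,k)$ into $\{0,\dots,n\}$, observe yourself that the natural candidate $(a,b,k)\mapsto T_a+k$ already fails at $n=36$, and then defer the repair to an unspecified ``secondary invariant'' or ``injection/involution argument''. That is not a proof; it is a statement that a proof might exist. Two features of the problem suggest this repair is not routine. First, the equality cases you list, $n\in\{0,1,3,6\}$, force any injection to be a \emph{bijection} there, so no slack is available to absorb collisions by a crude tiebreaker. Second, the collisions are governed by solutions of $T_a-T_{a'}=k'-k$, i.e.\ representations of integers as differences of triangular numbers, and the multiplicity of such representations is unbounded; a uniform secondary invariant taking boundedly many values cannot separate them in general. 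Until you actually produce the corrected map and verify injectivity, the argument is incomplete.

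For comparison, the paper abandons combinatorics at this point and proves~\eqref{key-sum} analytically. Writing $T_a+T_b=z$ as $(2a+1)^2+(2b+1)^2=8z+2$, the sum $R(n)$ becomes a count of lattice points with odd coordinates lying in a fixed residue class modulo~$16$ inside the disc $X^2+Y^2\leq 8n+2$. These points sit eight to an $8\times 8$ square, so covering the quarter-disc by such squares gives
\[
R(n)\leq \fr{\pi}{32}\bigl(\sqrt{8n+2}+8\sqrt{2}\bigr)^2,
\]
which is below $n+1$ once $n\geq 180$; the finitely many remaining cases are checked directly. If you wish to keep a bijective approach you will need a genuinely new idea beyond the map you have sketched; otherwise the circle-method bound above is the quickest way to close your argument.
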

%
We cannot prove any other positivity results for the series $\textstyle \sum_{n\geq 0} D'(k,m,n)q^n$ . However, we have the following
two conjectures.
\begin{conjecture}\label{conj positiveD22}
The series $\textstyle \sum_{n\geq 0}D'(2,2,n)q^n$ is positive.
\end{conjecture}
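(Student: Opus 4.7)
The plan is to apply Theorem~\ref{thm D'} with $(k,m)=(2,2)$, together with the explicit form \eqref{C'22} of $\sum_{n\ge 0} C'(2,2,n)q^n$. Since $m\ge k$, Theorem~\ref{thm D'} yields
\[
\sum_{n\geq 0} D'(2,2,n)\,q^n = \frac{1}{1-q}\sum_{n\geq 0}\frac{q^n(1-q^{2n+2})}{1-q^{2n+3}}-\frac{(q^2,q^4;q^2)_\infty}{(q;q^2)_\infty^2}.
\]
Substituting the second expression for $\sum C'(2,2,n)q^n$ from \eqref{C'22} and dividing by $q^2$ one obtains the alternative
\[
\sum_{n\geq 0} D'(2,2,n)\,q^n = \frac{1}{(1-q)^2}-\sum_{n\geq 0}\frac{q^{3n+2}}{1-q^{2n+3}}-\frac{(q^2,q^4;q^2)_\infty}{(q;q^2)_\infty^2}.
\]
The content of the conjecture is therefore that the linear growth coming from $1/(1-q)^2$ absorbs both subtracted pieces coefficient by coefficient.

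First I would seek a combinatorial reading of the subtracted infinite product $\frac{(q^2,q^4;q^2)_\infty}{(q;q^2)_\infty^2}$ as a signed count of partitions into odd parts decorated by two families of distinct even parts, the second family being forbidden to contain the part $2$. Expressing this in basic hypergeometric form—for instance by factoring off $(q^2;q^2)_\infty$ and then applying Heine's transformation or the $q$-Gauss sum to the remainder—should produce a sum of tractable terms whose signs can be tracked via a Jacobi/pentagonal expansion.

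Next I would pair those terms against slices of $\sum_{n\ge 0}\frac{q^{3n+2}}{1-q^{2n+3}}$ and of $\frac{1}{(1-q)^2}$. The model here is the proof of Theorem~\ref{thm positiveD21} for $(k,m)=(2,1)$: after expressing both sides in a common $q$-series language, one isolates a finite number of manifestly nonnegative pieces plus a tail whose positivity is directly visible from the summand. Following this template, the goal would be to write $\sum D'(2,2,n)q^n$ either as a sum of terms of the shape $q^{a}/((q^{b};q^{c})_\infty)$ with positive exponents, or as $1/\bigl((1-q)^2 (q;q^2)_\infty\bigr)$ multiplied by a power series with nonnegative coefficients.

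The main obstacle will be the delicate sign pattern of $\frac{(q^2,q^4;q^2)_\infty}{(q;q^2)_\infty^2}$. Unlike in the $(2,1)$ case, the numerator here contains $(q^4;q^2)_\infty$ rather than $(q^2;q^2)_\infty$, which shifts the pentagonal-number cancellations just enough that the naive pairing above is likely to leave residual negative contributions of order comparable to the linear growth of the $\frac{1}{(1-q)^2}$ term. Resolving this probably requires a new identity tailored to $(k,m)=(2,2)$—perhaps a Rogers--Fine type transformation or a Bailey-pair argument relating $\frac{(q^2,q^4;q^2)_\infty}{(q;q^2)_\infty^2}$ to a manifestly positive series—and discovering such an identity is where I expect the genuine mathematical work to lie, and indeed is presumably why the authors leave this statement as a conjecture.
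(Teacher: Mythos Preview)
The paper does not prove this statement; it is explicitly labelled a \emph{conjecture}, and the authors write just before it that they ``cannot prove any other positivity results'' for $\sum_{n\ge 0}D'(k,m,n)q^n$ beyond Theorem~\ref{thm positiveD21}. So there is no ``paper's own proof'' to compare against.

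Your proposal is likewise not a proof. The first two displayed formulas are correct consequences of Theorem~\ref{thm D'} and \eqref{C'22}, and they are a sensible starting point. But from that point on you only describe a strategy and, in the final paragraph, you yourself identify the genuine gap: you do not have a way to control the sign pattern of $\dfrac{(q^2,q^4;q^2)_\infty}{(q;q^2)_\infty^2}$ well enough to dominate it, together with $\sum_{n\ge 0}\dfrac{q^{3n+2}}{1-q^{2n+3}}$, by the coefficients of $\dfrac{1}{(1-q)^2}$. Invoking ``a Rogers--Fine type transformation or a Bailey-pair argument'' that you have not actually found is not a proof step. One further inaccuracy: your description of the proof of Theorem~\ref{thm positiveD21} as ``isolating manifestly nonnegative pieces plus a tail whose positivity is directly visible from the summand'' does not match what the paper does there. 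That proof rewrites $\dfrac{(q^2,q^4;q^2)_\infty}{(q;q^2)_\infty^2}=\dfrac{1}{1-q^2}\cdot\dfrac{(q^2;q^2)_\infty^2}{(q;q^2)_\infty^2}$, recognises the second factor as $\sum_{n\ge 0}t_2(n)q^n$ via the Gauss identity, and then bounds partial sums of $t_2$ by a Gauss circle-problem lattice-point count. If you want to push your approach, the natural analogue would be to seek a similar arithmetic interpretation here and then show that the additional subtraction $\sum_{n\ge 0}\dfrac{q^{3n+2}}{1-q^{2n+3}}$ can also be absorbed---but that extra term is exactly what makes $(2,2)$ harder than $(2,1)$, and neither you nor the paper has closed that gap.
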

\begin{conjecture}\label{conj positiveD23}
The only negative coefficients of the series $\textstyle \sum_{n\geq 0}D'(2,3,n)q^n$ occur at $n=10$ and $n=22$.
\end{conjecture}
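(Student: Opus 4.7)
The plan is to start from Theorem~\ref{thm D'} applied with $(k,m)=(2,3)$, which (since $m\geq k$) yields
\[
\sum_{n\geq 0}D'(2,3,n) q^n = \frac{1}{1-q} \sum_{n\geq 0}\frac{q^n (1-q^{2n+2})(1-q^{2n+4})}{(1-q^{2n+3})(1-q^{2n+5})} - \frac{(q^2,q^{4};q^2)_\infty}{(q;q^2)_\infty^2}.
\]
Unlike the results already proved in the paper, the assertion is not one of pure positivity: two specific indices must be exhibited at which the coefficient is strictly negative, while nonnegativity is required everywhere else. I would therefore split the argument into (i) a bulk decomposition of the first series into manifestly nonnegative pieces plus a small $n$-dependent correction, and (ii) a careful matching against the second, Jacobi-type term.

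For step (i), the factor $(1-q^{2n+2})(1-q^{2n+4})/((1-q^{2n+3})(1-q^{2n+5}))$ differs from $1$ only in a controlled rational piece whose expansion begins with $q^{2n+2}$; peeling this off gives a bulk contribution $\frac{1}{(1-q)^2}$ plus a residual series whose geometric expansion in each denominator $1-q^{2n+3}$ and $1-q^{2n+5}$ produces a sum of hook-type monomials $q^a/(1-q^b)$. For step (ii), using $(q^2;q^2)_\infty(q^4;q^2)_\infty = (1-q^2)(q^4;q^2)_\infty^2$, together with the identity $(q^2;q^2)_\infty = (q;q)_\infty/(q;q^2)_\infty$, one can rewrite the correction as a theta-like quotient. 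The objective is to recombine everything into a single expression that is term-by-term nonnegative except at two places.

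The main obstacle, and the reason the assertion is posed as a conjecture rather than a theorem, is to explain why the defects occur at precisely $n=10$ and $n=22$. Any combinatorial proof would have to exhibit a sign-reversing involution on $\mathcal{D}(2,3,n)$ whose unmatched partitions are enumerated exactly, with a sporadic arithmetic obstruction localized at these two indices; the numbers $10$ and $22$ are suggestive of small pentagonal or triangular data ($10=1+9=4+6$, $22=1+21=7+15$) but I see no clean pattern a priori. As a safety net I would first verify the conjecture numerically for $n$ up to a large threshold, and then try to bound the tail by asymptotic matching of the bulk series against the theta correction, in the spirit of the proofs of Theorems~\ref{thm positiveC23}, \ref{thm positiveC41}, and~\ref{thm positiveD21}. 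If the asymptotic balance shows dominance of the positive part for every $n>22$, then the proof collapses to that finite check; pinning down the sporadic mechanism that isolates $n=10$ and $n=22$ from their neighbors is, I expect, the true hard core of the problem, and likely requires a finer identity than any currently available in the paper.
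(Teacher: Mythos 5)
This statement is one of the paper's open conjectures: the authors explicitly write that they ``cannot prove any other positivity results for the series $\sum_{n\geq 0}D'(k,m,n)q^n$'' beyond the case $(k,m)=(2,1)$, so there is no proof in the paper to compare yours against. Your starting point is correct --- Theorem~\ref{thm D'} with $(k,m)=(2,3)$ does give
\[
\sum_{n\geq 0}D'(2,3,n)q^n=\frac{1}{1-q}\sum_{n\geq 0}\frac{q^n(1-q^{2n+2})(1-q^{2n+4})}{(1-q^{2n+3})(1-q^{2n+5})}-\frac{(q^2,q^4;q^2)_\infty}{(q;q^2)_\infty^2},
\]
and your product manipulations for the subtracted term are accurate (it is $\frac{1}{1-q^2}\sum_{n\ge 0}t_2(n)q^n$ in the notation of Section~\ref{sec proof thm positiveD21}). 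But what you have written is a research plan, not a proof, and you say so yourself: the decomposition in step (i) is never carried out, the ``careful matching'' in step (ii) is not specified, no tail bound is established, and the mechanism isolating $n=10$ and $n=22$ is explicitly left open. None of the three required facts --- negativity at $n=10$, negativity at $n=22$, nonnegativity for all other $n$ --- is actually demonstrated.

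That said, your proposed strategy is the natural one and is consistent with how the authors handled $(k,m)=(2,1)$: there, the bulk term is exactly $\frac{1}{(1-q)^2}$ and the correction is controlled by a lattice-point count inside a circle, reducing the claim to a finite check. If you can show that the coefficientwise defect of $\frac{1}{1-q}\sum_n q^n\cdot\frac{(1-q^{2n+2})(1-q^{2n+4})}{(1-q^{2n+3})(1-q^{2n+5})}$ below $\frac{1}{(1-q)^2}$ grows strictly slower than $N+1-\frac{1}{1-q^2}$-weighted $t_2$ sums for all $N>22$ (an effective, not merely asymptotic, inequality), then the conjecture does collapse to a finite verification, and that would be a genuine contribution beyond the paper. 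As it stands, the hard core you identify --- why precisely $n=10$ and $n=22$, and an effective bound valid for every $n>22$ --- is exactly the part that is missing, so the proposal does not settle the statement.
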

Also, we have the following conjecture.
\begin{conjecture}\label{conj positiveDkm}
If $k > m$, then $\textstyle \sum_{n\geq 0}D'(k,m,n)q^n$ is eventually positive.
\end{conjecture}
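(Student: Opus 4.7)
The plan is to establish eventual positivity via an asymptotic analysis of the generating function as $q\to 1^-$. By Theorem~\ref{thm D'}, in the regime $k>m$ one has
\[
\sum_{n\geq 0} D'(k,m,n)\,q^n = A(q)-B(q),
\]
where
\[
A(q)=\frac{1}{(q;q^2)_{k-1}}\sum_{n\geq 0}q^n(q^{2n+2};q^2)_{m-1}(q^{2n+2m+1};q^2)_{k-m-1}, \quad B(q)=\frac{(q^2,q^{2k};q^2)_\infty}{(q;q^2)_\infty^2}.
\]
Setting $q=e^{-t}$, I would compute the leading singular part of each of $A$ and $B$ as $t\to 0^+$, verify that $A$ strictly dominates $B$ in leading order, and then extract the coefficient asymptotic.

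For $A(q)$, I expand the two finite $q$-products via the $q$-binomial theorem and sum the resulting geometric series in $n$, exhibiting $A(q)$ as an explicit rational function. Its leading singular part at $q=1$ reduces to a double sum over subset-sizes which, using $1/(2L+1)=\int_0^1 x^{2L}\,dx$ and the binomial theorem, collapses to $\int_0^1(1-x^2)^{k-2}\,dx=\sqrt{\pi}\,\Gamma(k-1)/(2\Gamma(k-1/2))$. Combined with $(q;q^2)_{k-1}\sim(2k-3)!!\,t^{k-1}=2^{k-1}\Gamma(k-1/2)\,t^{k-1}/\sqrt{\pi}$ this yields
\[
A(q)\sim \frac{\pi\,\Gamma(k-1)}{2^k\,\Gamma(k-1/2)^2}\,t^{-k},
\]
a leading constant independent of $m$. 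For $B(q)$, the modular asymptotics $(q;q)_\infty\sim\sqrt{2\pi/t}\,e^{-\pi^2/(6t)}$ and $(q^2;q^2)_\infty\sim\sqrt{\pi/t}\,e^{-\pi^2/(12t)}$ give $(q;q^2)_\infty\sim\sqrt{2}\,e^{-\pi^2/(12t)}$; combined with $(q^{2k};q^2)_\infty=(q^2;q^2)_\infty/(q^2;q^2)_{k-1}$, the essentially-singular exponential factors cancel to leave $B(q)\sim \pi/(2^k\,\Gamma(k))\,t^{-k}$. The sign of the leading coefficient of $A-B$ is therefore that of $\Gamma(k-1)\Gamma(k)-\Gamma(k-1/2)^2$, which is strictly positive by the (strict) log-convexity of $\Gamma$ applied at the pair $\{k-1,k\}$ with midpoint $k-1/2$. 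Hence $A(q)-B(q)\sim C_k\,t^{-k}$ with $C_k>0$.

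The main obstacle is converting this $q\to 1^-$ asymptotic into the coefficient asymptotic $[q^N](A-B)\sim C_k\,N^{k-1}/(k-1)!$. The other singularities of $A(q)$ on $|q|=1$ are straightforward to control: they lie at odd-order roots of unity with pole order at most $k-1<k$. For $B(q)$, however, each odd-order root of unity $\zeta\neq 1$ is an essential singularity of $(q;q^2)_\infty^{-2}$, and one must carry out a Mordell-type modular analysis at the corresponding cusp to verify that the local blow-up has order strictly less than $k$. Once subdominance at every $\zeta\neq 1$ is secured, the standard singularity-analysis transfer yields the desired coefficient asymptotic, and in particular $D'(k,m,n)>0$ for all sufficiently large $n$.
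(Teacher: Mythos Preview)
The statement you are attempting is listed in the paper as Conjecture~\ref{conj positiveDkm}; the authors offer no proof and group it with their other open positivity conjectures. There is therefore nothing on the paper's side to compare your attempt against.

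On its own merits, your outline is sound at the level of the $q\to 1^-$ asymptotics: the evaluation $C_A=\pi\,\Gamma(k-1)\big/\bigl(2^{k}\Gamma(k-\tfrac12)^{2}\bigr)$ via the collapse of the double subset sum to $\int_0^1(1-x^2)^{k-2}\,dx$ (whence the $m$-independence), the modular computation $C_B=\pi\big/\bigl(2^{k}\Gamma(k)\bigr)$, and the strict log-convexity inequality $\Gamma(k-1)\Gamma(k)>\Gamma(k-\tfrac12)^{2}$ all check out. But what you have written is a plan rather than a proof: you yourself call the passage from the $q\to 1^-$ asymptotic to the coefficient asymptotic the ``main obstacle'' and leave it undone. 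Flajolet--Sedgewick singularity analysis does not apply directly to $B(q)$, whose unit circle is a natural boundary, so the ``Mordell-type modular analysis at each cusp'' you allude to would really amount to a circle-method argument, none of which is supplied.

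A lighter route bypasses that machinery entirely. Since $A(q)$ is a rational function with a pole of exact order $k$ at $q=1$ and strictly smaller order at every other (necessarily odd-order) root of unity, partial fractions give $[q^{N}]A(q)=C_A\,N^{k-1}/(k-1)!+O(N^{k-2})$ directly. For $B(q)=\psi(q)^{2}/(q^{2};q^{2})_{k-1}$ with $\psi(q)^{2}=\sum_{n}t_{2}(n)q^{n}$, the coefficient $[q^{N}]B(q)$ is the convolution of $t_{2}$ against a degree-$(k-2)$ quasi-polynomial, and the lattice-point estimate $\sum_{n\le N}t_{2}(n)=\tfrac{\pi}{2}N+O(N^{1/2})$ (the Gauss circle problem, exactly in the spirit of the paper's own proof of Theorem~\ref{thm positiveD21}) together with Abel summation yields $[q^{N}]B(q)=C_B\,N^{k-1}/(k-1)!+o(N^{k-1})$. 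Carrying out these two elementary estimates would convert your sketch into an actual proof of the conjecture.
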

\section{Proof of Theorem~\ref{thm C'} and Theorem~\ref{thm D'}}\label{sec proof thm-C'D'}
\emph{Proof of Theorem~\ref{thm C'}.\ }
We will require Heine's first transformation~\cite{Andrews, Gasper-Rahman}
\begin{equation}\label{Heine-1}
\pFq{2}{1}{a,b}{c}{q, z} =\fr{(b,az;q)_\infty}{(c,z;q)_\infty}\pFq{2}{1}{c/b,z}{az}{q, b}.
\end{equation}
We have
\[
\sum_{n\geq 0} \fr{q^{m(2n+1)} (q^{2n+2}, q^{2n+2k};q^2)_\infty}{(q^{2n+1};q^2)_\infty^2}
=q^m\fr{(q^2,q^{2k};q^2)_\infty}{(q;q^2)_\infty^2} \sum_{n\geq 0}\fr{ q^{2mn} (q;q^2)_{n}^2}{(q^2,q^{2k};q^2)_n}
\]
\[
=q^m\fr{(q^2,q^{2k};q^2)_\infty}{(q;q^2)_\infty^2} \pFq{2}{1}{q,q}{q^{2k}}{q^2, q^{2m}}
\]
\[
=q^m\fr{(q^2,q^{2k};q^2)_\infty}{(q;q^2)_\infty^2}
\fr{(q,q^{2m+1};q^2)_\infty}{(q^{2k},q^{2m};q^2)_\infty}\pFq{2}{1}{q^{2k-1},q^{2m}}{q^{2m+1}}{q^2, q}
\]
\[
=q^m\fr{(q^2,q^{2k};q^2)_\infty}{(q;q^2)_\infty^2}
\fr{(q,q^{2m+1};q^2)_\infty}{(q^{2k},q^{2m};q^2)_\infty}
\sum_{n\geq 0} \fr{q^n (q^{2k-1},q^{2m};q^2)_n}{(q^2,q^{2m+1};q^2)_n}
\]
\[
=q^m
\fr{(q^{2k-1};q^2)_\infty}{(q;q^2)_\infty}
\sum_{n\geq 0} \fr{q^n (q^{2n+2},q^{2n+2m+1};q^2)_\infty}{(q^{2n+2k-1},q^{2n+2m};q^2)_\infty}
\]
\[
=\begin{cases}
\fr{q^m}{(q;q^2)_{k-1}} \displaystyle\sum_{n\geq 0}\fr{q^n (q^{2n+2};q^2)_{m-1}}{(q^{2n+2k-1};q^2)_{m-k+1}}
&\ \text{if $m\geq k$} \\
\fr{q^m}{(q;q^2)_{k-1}} \displaystyle\sum_{n\geq 0} q^n (q^{2n+2};q^2)_{m-1}(q^{2n+2m+1};q^2)_{k-m-1}
&\ \text{if $m<k$},
\end{cases}
\]
where in the  third step we applied~\eqref{Heine-1}.

\emph{Proof of Theorem~\ref{thm D'}.\ }
By~\eqref{basic}, we find
\[
\sum_{n\geq 0} \fr{q^{m(2n+2)} (q^{2n+4}, q^{2n+2+2k};q^2)_\infty}{(q^{2n+3};q^2)_\infty^2}
=q^{2m}\fr{(q^4,q^{2k+2};q^2)_\infty}{(q^3;q^2)_\infty^2} \sum_{n\geq 0}\fr{ q^{2mn} (q^3;q^2)_{n}^2}{(q^4,q^{2k+2};q^2)_n}
\]
\[
=q^{2m}\fr{(q^4,q^{2k+2};q^2)_\infty}{(q^3;q^2)_\infty^2}\fr{(1-q^2)(1-q^{2k})}{(1-q)^2}
\sum_{n\geq 0} q^{2mn}\fr{(q;q^2)_{n+1}^2}{(q^2,q^{2k};q^2)_{n+1}}
\]
\[
=q^{2m}\fr{(q^2,q^{2k};q^2)_\infty}{(q;q^2)_\infty^2}
\sum_{n\geq 1} q^{2m(n-1)}\fr{(q;q^2)_{n}^2}{(q^2,q^{2k};q^2)_{n}}
\]
\[
=\fr{(q^2,q^{2k};q^2)_\infty}{(q;q^2)_\infty^2}
\Big( \sum_{n\geq 0} q^{2mn}\fr{(q;q^2)_{n}^2}{(q^2,q^{2k};q^2)_{n}} - 1 \Big)
\]
\[
=\sum_{n\geq 0} q^{2mn} \fr{(q^{2n+2},q^{2n+2k};q^2)_\infty}{(q^{2n+1};q^2)_\infty^2}
-\fr{(q^2,q^{2k};q^2)_\infty}{(q;q^2)_\infty^2}
\]
\[
=q^{-m} \sum_{n\geq 0} q^{m(2n+1)}\fr{(q^{2n+2},q^{2n+2k};q^2)_\infty}{(q^{2n+1};q^2)_\infty^2}
-\fr{(q^2,q^{2k};q^2)_\infty}{(q;q^2)_\infty^2},
\]
which gives the desired formula by~\eqref{gen C'} and Theorem~\ref{thm C'}.
\section{Proof of Theorem~\ref{thm positiveC23}}\label{sec proof thm positiveC23}
By Theorem~\ref{thm C'} with $(k,m)=(2,3)$ we find after simplification
\[
\sum_{n\geq 0} C'(2,3,n)q^n = \sum_{n\geq 0}\fr{q^3(1-q^{2n+2})(1-q^{2n+4})}{(1-q)(1-q^{2n+3})(1-q^{2n+5})}.
\]
We want to prove that the coefficient of $q^N$ in
\[
\fr{q^3(1-q^{2n+2})(1-q^{2n+4})}{(1-q)(1-q^{2n+3})(1-q^{2n+5})}
\]
is nonnegative.
To this end, we need some preparation. We start with a lemma.
\begin{lemma}\label{lem-1 C23}
Let for any nonnegative integer $n$
\[
\fr{1}{(1+q)(1-q^{2n+3})} = \sum_{N\geq 0} a(N) q^N.
\]
Then
\[
a(N) = \begin{cases}
(-1)^N &\ \text{if\ } \lfloor \fr{N}{2n+3} \rfloor \equiv 0\Mod{2}, \\
0 &\ \text{otherwise.}
\end{cases}
\]
\end{lemma}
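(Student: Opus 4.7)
The plan is to expand both factors as geometric series and identify the coefficient of $q^N$ in the convolution.

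First I would write
\[
\fr{1}{1+q} = \sum_{j\geq 0}(-1)^j q^j, \qquad \fr{1}{1-q^{2n+3}} = \sum_{i\geq 0} q^{i(2n+3)},
\]
both valid for $|q|<1$. Multiplying these expansions shows that
\[
a(N) = \sum_{\substack{i,j\geq 0 \\ j + i(2n+3) = N}} (-1)^j.
\]
For each fixed $N$, the index $i$ ranges over $\{0,1,\ldots,Q\}$ where $Q = \lfloor N/(2n+3)\rfloor$, and the corresponding $j$ is uniquely determined as $j = N - i(2n+3)$.

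Next I would exploit the parity of $2n+3$. Since $2n+3$ is odd, $(-1)^{i(2n+3)} = (-1)^i$, so
\[
(-1)^j = (-1)^{N - i(2n+3)} = (-1)^N (-1)^i.
\]
Pulling the common factor $(-1)^N$ out, one obtains
\[
a(N) = (-1)^N \sum_{i=0}^{Q} (-1)^i.
\]
The inner alternating sum equals $1$ when $Q$ is even and $0$ when $Q$ is odd, which is exactly the case distinction in the statement of the lemma. This yields the claimed formula for $a(N)$.

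There is no real obstacle here; the argument is a direct convolution of two geometric series, and the only substantive observation is that $2n+3$ is odd so the exponent $j$ has the same parity as $N-i$. I would present the calculation in roughly four lines, making the parity step explicit so that the reader sees where the hypothesis on the denominator being $1-q^{\text{odd}}$ is used.
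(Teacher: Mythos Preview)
Your argument is correct and is essentially the same as the paper's own proof: both expand the two geometric series, convolve, and sum the alternating signs over $i=0,\dots,\lfloor N/(2n+3)\rfloor$. Your write-up is in fact slightly cleaner, since you make explicit the parity observation $(-1)^{i(2n+3)}=(-1)^i$ that the paper leaves implicit when it asserts that the terms ``alternate in sign.''
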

\begin{proof}
We have
\[
\fr{1}{(1+q)(1-q^{2n+3})}= \sum_{j=0}^\infty (-1)^j q^j \sum_{m=0}^\infty q^{m(2n+3)}.
\]
Then to find the coefficient of $a(N)$ we need first to solve the equation
$N=j+m(2n+3)$ where $j\geq 0$ and $0\leq m \leq \lfloor \fr{N}{2n+3}\rfloor$.
The solutions are
\[
\begin{cases}
j= N &\ \text{if $m=0$}, \\
j=N-(2n+3) &\ \text{if $m=1$}, \\
j=N-2(2n+3) &\ \text{if $m=2$}, \\
\vdots &\ \vdots \\
j=N- \lfloor \fr{N}{2n+3}\rfloor (2n+3) &\ \text{if $m= \lfloor \fr{N}{2n+3}\rfloor$}. \\
\end{cases}
\]
As these solutions alternate in sign, we have
\[
a(N)= (-1)^N - (1)^N + (-1)^N +\cdots
= \begin{cases}
(-1)^N &\ \text{if\ } \lfloor \fr{N}{2n+3} \rfloor \equiv 0\Mod{2}, \\
0 &\ \text{otherwise,}
\end{cases}
\]
as desired.
\end{proof}
By shifting the coefficient from $N$ to $N-A$, we get the following important consequence of Lemma~\ref{lem-1 C23}.
\begin{corollary}\label{cor-1 C23}
Let for any nonnegative integer $n$ and any positive integer $A$
\[
\fr{q^A}{(1+q)(1-q^{2n+3})} = \sum_{N\geq 0} a(N) q^N.
\]
Then
\[
a(N) = \begin{cases}
(-1)^{N-A} &\ \text{if\ } \lfloor \fr{N-A}{2n+3} \rfloor \equiv 0\Mod{2}, \\
0 &\ \text{otherwise.}
\end{cases}
\]
\end{corollary}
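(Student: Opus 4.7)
The plan is to obtain Corollary~\ref{cor-1 C23} as a one-step reindexing of Lemma~\ref{lem-1 C23}. First I would write the lemma as
\[
\fr{1}{(1+q)(1-q^{2n+3})} = \sum_{M\geq 0} b(M)\, q^M,
\]
where $b(M) = (-1)^M$ when $\lfloor M/(2n+3)\rfloor \equiv 0 \Mod{2}$ and $b(M) = 0$ otherwise. The key observation is entirely elementary: multiplying a formal power series by $q^A$ shifts every exponent up by $A$. Applied here,
\[
\fr{q^A}{(1+q)(1-q^{2n+3})} = \sum_{M\geq 0} b(M)\, q^{M+A} = \sum_{N\geq A} b(N-A)\, q^N,
\]
so comparing coefficients with $\sum_{N\geq 0} a(N) q^N$ we read off $a(N) = b(N-A)$ whenever $N \geq A$ and $a(N) = 0$ for $0 \leq N < A$.

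Next I would substitute the formula for $b(N-A)$ from Lemma~\ref{lem-1 C23} with the index $M$ replaced by $N-A$. This produces exactly the closed form claimed: $a(N) = (-1)^{N-A}$ when $\lfloor (N-A)/(2n+3)\rfloor \equiv 0 \Mod{2}$ and $a(N) = 0$ otherwise, valid on the range $N \geq A$.

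The only minor bookkeeping point is the range $0 \leq N < A$, where the coefficient is manifestly zero because the power series starts at $q^A$. One checks that the stated formula is consistent there as well once one reads off the (non-positive) value of $\lfloor (N-A)/(2n+3)\rfloor$ on this range; this is a cosmetic matter rather than a genuine obstacle. Overall there is no serious difficulty, since the corollary is literally the shift of Lemma~\ref{lem-1 C23} by $A$, as the sentence preceding its statement already indicates.
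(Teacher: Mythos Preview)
Your proposal is correct and is exactly the paper's approach: the paper's entire justification is the single sentence ``By shifting the coefficient from $N$ to $N-A$,'' and you have simply spelled out that shift. One small caveat: your remark that the stated formula is automatically consistent for $0\le N<A$ is not literally true for arbitrary $A$ (e.g.\ if $A>2n+3$ the floor can be even while the true coefficient is zero), but this is a wrinkle in the corollary's statement rather than in your argument, and it never arises in the paper's applications where $A\in\{2,3\}$.
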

We need another lemma.
\begin{lemma}\label{lem-2 C23}
For any nonnegative integer $n$ we have
\[
\fr{q^3(1-q^{2n+2})(1-q^{2n+4})}{(1-q)(1-q^{2n+3})(1-q^{2n+5})}
\]
\[
=\fr{q}{1-q}-\fr{q^2}{(1+q)(1-q^{2n+3})}-\fr{q}{1-q^{2n+5}} -\fr{q^3}{(1+q)(1-q^{2n+5})}
\]
\end{lemma}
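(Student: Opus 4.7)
The plan is to prove Lemma~\ref{lem-2 C23} as a straightforward rational-function identity in $q$, with $n$ treated as a parameter. Multiplying both sides by the common denominator $(1-q)(1+q)(1-q^{2n+3})(1-q^{2n+5})$ (using $(1-q)(1+q)=1-q^2$ to merge the $1+q$'s appearing on the right with the $1-q$ on the left), the claim reduces to the polynomial identity
\[
q^3(1+q)(1-q^{2n+2})(1-q^{2n+4}) = q(1+q)(1-q^{2n+3})(1-q^{2n+5})
\]
\[
-\,q^2(1-q)(1-q^{2n+5}) - q(1-q^2)(1-q^{2n+3}) - q^3(1-q)(1-q^{2n+3}).
\]

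To carry this out efficiently, I would set $u=q^{2n+3}$, so that $q^{2n+5}=q^2 u$, $q^{2n+2}=u/q$, and $q^{2n+4}=qu$. Both sides then become polynomials of degree $2$ in $u$ with coefficients in $\mathbb{Z}[q,q^{-1}]$, and it suffices to match the coefficients of $u^0$, $u^1$, $u^2$ separately. A short expansion yields $q^3+q^4$ for the constant terms on both sides, $-(q^2+q^3+q^4+q^5)$ for the coefficients of $u$, and $q^3+q^4$ for the coefficients of $u^2$, which confirms the identity and hence the lemma.

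No serious obstacle is anticipated: the verification is pure algebraic bookkeeping of products of at most three binomials. The only mild trap is tracking signs and the extra factor of $1+q$ that appears when merging the $\fr{q}{1-q}$ contribution with the two right-hand terms whose denominators already contain $1+q$; beyond that, the computation is routine. One could alternatively attempt to derive each of the four right-hand terms as residues at the appropriate poles $q=1$, $q^{2n+3}=1$, and $q^{2n+5}=1$, but direct algebraic verification is the shortest path and is the route I would take.
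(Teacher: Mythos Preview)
Your proposal is correct and is essentially what the paper does: its entire proof is ``Immediate by inspection,'' i.e.\ direct algebraic verification of the rational-function identity. Your substitution $u=q^{2n+3}$ and coefficient-matching simply make that inspection explicit, and the computed coefficients $q^3+q^4$, $-(q^2+q^3+q^4+q^5)$, $q^3+q^4$ check out.
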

\begin{proof}
Immediate by inspection.
\end{proof}
From Lemma~\ref{lem-2 C23} and Corollary~\ref{cor-1 C23} we see that the coefficient of $q^N$ in
\[
\fr{q^3(1-q^{2n+2})(1-q^{2n+4})}{(1-q)(1-q^{2n+3})(1-q^{2n+5})}
\]
is $1-T_1-T_2-T_3$,
where
\[
T_1=
 \begin{cases}
(-1)^N &\ \text{if\ }\lfloor \fr{N-2}{2n+3} \rfloor \equiv 0\Mod{2} \\
0&\ \text{otherwise},
\end{cases}
\]
\[
T_2=
\begin{cases}
(-1)^{N-1} &\ \text{if\ }\lfloor \fr{N-3}{2n+5} \rfloor \equiv 0\Mod{2} \\
0&\ \text{otherwise},
\end{cases}
\]
and
\[
T_3=
\begin{cases}
1 &\ \text{if\ } N \equiv 1 \Mod{2n+5} \\
0&\ \text{otherwise}.
\end{cases}
\]
Note that it is not possible to have $T_1+T_2 =2$ because $(-1)^N + (-1)^{N+1} =0$. Hence, if $N\not\equiv 1\Mod{2n+5}$, then
the coefficient of $q^N$ is nonnegative. So there are only two possible cases where the coefficient of $q^N$ might be $-1$:
$N\equiv 1\Mod{2n+5}$, $N$ is even, and $\lfloor \fr{N-2}{2n+3}\rfloor$ is even {\bf or}
$N\equiv 1\Mod{2n+5}$, $N$ is odd, and $\lfloor \fr{N-3}{2n+5}\rfloor$ is even. We now deal with each of these two cases.

\emph{Case 1.\ } Write $N=k(2n+5)+1$. So for $N$ to be even, $k$ must be odd, say $k=2j+1$, but then we get $T_2=0$. So,
$\lfloor \fr{N-3}{2n+5} \rfloor$ must be odd. But
\[
\left\lfloor \fr{N-3}{2n+5} \right\rfloor = \left\lfloor \fr{(2j+1)(2n+5)+1-3}{2n+5} \right\rfloor
\]
\[
\left\lfloor (2j+1)-\fr{2}{2n+5}\right\rfloor = 2j,
\]
which gives a contradiction. This yields a nonnegative coefficient for $q^N$.

\emph{Case 2.\ } Writing $N=k(2n+5)+1$ and using the assumption that $N$ is odd we see that $k$ must be even, say $k=2j$. Thus
\[
\left\lfloor \fr{N-3}{2n+5} \right\rfloor = \left\lfloor \fr{(2j)(2n+5)+1-3}{2n+5} \right\rfloor
\]
\[
\left\lfloor 2j-\fr{2}{2n+5}\right\rfloor = 2j-1,
\]
contradicting the fact that $\lfloor \fr{N-3}{2n+5} \rfloor$ is even. This yields a nonnegative coefficient for $q^N$ as well.
The proof is complete.
\section{Proof of Theorem~\ref{thm positiveC41}}\label{sec proof thm positiveC42}
By Theorem~\ref{thm C'} with $(k,m)=(4,1)$, we find
\[
\sum_{n\geq 0}C'(4,1,n)q^n = \fr{q}{(1-q)(1-q^3)(1-q^5)}\sum_{n\geq 0} q^n (1-q^{2n+3})(1-q^{2n+5}).
\]
So, to establish the desired inequality it is enough to show that $\fr{(1-q^{2n+3})(1-q^{2n+5})}{(1-q)(1-q^3)(1-q^5)}$ has nonnegative coefficients.
This clearly holds if $3\mid 2n+3$ or $3\mid 2n+5$. Now assume that $3\nmid 2n+3$ and $3\nmid 2n+5$. This in particular implies that $3\mid 2n+1$.
Then $3\nmid 2n$ and $3\nmid 2n+2$ and therefore that $3\mid 2n-2$.
Thus
\[
\fr{(1-q^{2n+3})(1-q^{2n+5})}{(1-q)(1-q^3)(1-q^5)}
=\fr{(1-q^{2n+5})\big( 1-q^{2n-2} +q^{2n-2}(1-q^5)\big)}{(1-q)(1-q^3)(1-q^5)}
\]
\[
=\fr{1-q^{2n+5}}{(1-q)(1-q^5)} \fr{1-q^{2n-2}}{1-q^3}
+q^{2n-2} \fr{1-q^{2n+5}}{(1-q)(1-q^3)}
\]
which has nonnegative coefficients. This completes the proof.
\section{Proof of Theorem~\ref{thm positiveD21}}\label{sec proof thm positiveD21}
Recall that a triangular number is an integer of the form $\fr{x(x+1)}{2}$ for a nonnegative integer $x$.
Throughout let
$t_2(n)$  denote the number of ways $n$ can be written as a sum of two triangular numbers.
By the Gauss sum~\cite{Andrews}
\[
\fr{(q^2;q^2)_\infty}{(q;q^2)_\infty} = \sum_{n\geq 0}q^{\fr{n(n+1)}{2}},
\]
we deduce that
\begin{equation}\label{t2-generating}
\fr{(q^2;q^2)_\infty^2}{(q;q^2)_\infty^2} = \sum_{n\geq 0} t_2(n) q^n.
\end{equation}
By Theorem~\ref{thm D'} applied to $(k,m)=(2,1)$, we find
\begin{equation}\label{D21 help1}
\sum_{n\geq 0}D'(2,1,n) q^n = \fr{1}{(1-q)^2}-\fr{(q^2,q^4;q^2)_\infty}{(q;q^2)_\infty^2}
= \fr{1}{(1-q)^2}-\fr{1}{1-q^2}\fr{(q^2;q^2)_\infty^2}{(q;q^2)_\infty^2}.
\end{equation}
As $\fr{1}{(1-q)^2}= \sum_{n\geq 0} (n+1)q^n$, the positivity of the left hand-side of~\eqref{D21 help1}
means that for every nonnegative integer $N$, we have
\begin{equation}\label{key-sum}
t_2(N)+t_2(N-2) + t_2(N-4) +\cdots \leq N+1.
\end{equation}
Now, if
\[
\fr{x(x+1)}{2} + \fr{y(y+1)}{2} =z,
\]
then
\begin{equation}\label{2squares}
(2x+1)^2 + (2y+1)^2 = 8z+2.
\end{equation}
\emph{Case 1.} $z=2N$. Then~\eqref{2squares} becomes
\[
(2x+1)^2 + (2y+1)^2 = 16N+2
\]
and the sum~\eqref{key-sum} can be viewed as counting integer points $(x,y)$ lying inside the circle
\begin{equation}\label{circle-1}
X^2 + Y^2 = 16N+2
\end{equation}
such that $x$ and $y$ are odd satisfying $x^2+y^2\equiv 2\Mod{16}$.
Such points follow a regular pattern in the plane consisting of eight integer points
\begin{equation}\label{eight-1}
(1,1),(7,7),(1,7),(7,1), (3,3), (5,5), (3,5),(5,5)
\end{equation}
inside the $8\times 8$ square whose vertices are $(0,0),(0,8),(8,0),(8,8)$.
Then we may view the plane as covered with such $8\times 8$ squares with exactly eight integer points
inside satisfying the above requirements. So, to count the integer points that provide the totality of the sum~\eqref{key-sum}
we need count the number of $8\times 8$ squares that cover the first quadrant of the circle~\eqref{circle-1}.
As the diagonal of the $8\times 8$ square is $8\sqrt{2}$, we have that the circle of radius $\sqrt{16N+2} + 8\sqrt{2}$ contains any $8\times 8$ lattice square that intersects the inner circle whose radius is $\sqrt{16N+2}$.
Hence, the number of the $8\times 8$ squares required to cover the first quadrant of the circle~\eqref{circle-1} is less than
\[
\fr{\pi\big( \sqrt{16N+2}+8\sqrt{2}\big)^2/4}{64}.
\]
Combining this with the fact that each $8\times 8$ square has eight admissible integer points, we get
\begin{align}
t_2(2N)+t_2(2N-2)+t_2(2N-4)+\cdots
&\leq \fr{\pi\big( \sqrt{16N+2}+8\sqrt{2}\big)^2/4}{64} \cdot 8 \nonumber \\
&=\fr{\pi \big( 16N+130+16\sqrt{2}\sqrt{16N+2} \big)}{32} \nonumber \\
&=\fr{\pi N}{2} +\fr{65 \pi}{16}+ 2\sqrt{2N+\fr{1}{4}} \label{sum-inequality}.
\end{align}
Now if
\[
f(x)=(2x+1)-\fr{\pi}{2} x -\fr{65 \pi}{16} - 2\sqrt{2x+\fr{1}{4}},
\]
then
\[
f'(x)=2-\fr{\pi}{2}-2\Big( 2x+\fr{1}{4} \Big)^{-\fr{1}{2}}
\]
and
\[
f''(x)= 2\Big( 2x+\fr{1}{4} \Big)^{-\fr{3}{2}}.
\]
Then clearly $f''(x) >0$ for $x\geq 0$, showing that $f'(x)$ is increasing on the interval $[0,\infty)$.
This combined with the fact that
\[
2-\fr{\pi}{2} -2\cdot \fr{2}{11} =0.06556\cdots
\]
yields that $f'(x) >0$ for $x\geq 15$.
Hence since
\[
f(90)=181-\fr{90 \pi}{2}-\fr{65 \pi}{16}-2\sqrt{180+\fr{1}{4}}
=0.0141\cdots,
\]
we see that
\[
(2N+1)-\Big(t_2(2N)+t_2(2N-2)+t_2(2N-4)+\cdots+t_2(0) \Big) > 0
\]
for any $N\geq 90$. The remaining even cases of $2N \leq 90$ follow by inspection.

\emph{Case 2.} $z=2N+1$. This works exactly the same as \emph{Case 1} with~\eqref{circle-1} replaced with
\begin{equation}\label{circle-2}
X^2 + Y^2 = 16N+10
\end{equation}
and the eight integer points inside the $8\times 8$ square in the pattern~\eqref{eight-1} replaced with
\begin{equation}\label{eight-1}
(3,1),(5,1),(1,3),(1,5), (7,3), (7,5), (3,7),(5,7).
\end{equation}

\section{Concluding remarks}\label{sec concluding}
{\bf 1.\ } Our proofs for the inequalities in Corollaries~\ref{cor ineqC1}-\ref{cor ineqD21} went through the theory of $q$-series.
It would be interesting to find combinatorial proofs for these results.

{\bf 2.\ } It is worth to note that unlike Theorem~\ref{thm positiveC23} for $(k,m)=(2,3)$ and Conjecture~\ref{conj positiveC24} for $(k,m)=(2,4)$,
the series $\textstyle \sum_{n\geq 0} C'(2,5,n) q^n$ is oscillating with the first few negative values at $n=688, 690, 692, 887, 889,891,893,\ldots$.
So, it is natural to ask the following question.
\begin{openproblem}\label{Problem C2m}
Are there any positive integers $m>4$ which for which the series $\textstyle \sum_{n\geq 0} C'(2,m,n) q^n$ is positive?
\end{openproblem}
%
%
\bigskip
\noindent{\bf Data Availability Statement.}
Not applicable.

\bigskip
\noindent{\bf Declarations.}
The author states that there is no conflict of interest.
\end{document}